\newcommand{\be}{\begin{equation}}
\newcommand{\ee}{\end{equation}}
\newcommand{\bit}{\begin{itemize}}
	\newcommand{\eit}{\end{itemize}}
\def\diag{{\rm diag}}
\def\Diag{{\rm Diag}}
\renewcommand{\Re}{{\rm I}\!  {\rm R}}
\def\rank{{\rm rank}}
\newtheorem{proposition}{Proposition}[section]
\newtheorem{theorem}[proposition]{Theorem}
\newtheorem{lemma}[proposition]{Lemma}
\newtheorem{definition}[proposition]{Definition}
\newtheorem{remark}[proposition]{Remark}
\newtheorem{algorithm}[proposition]{Algorithm}
\newtheorem{proof}[proposition]{proof}
\renewcommand{\Re}{{\rm I}\!  {\rm R}}
\begin{document}
\title{Feasibility and A Fast Algorithm for Euclidean Distance Matrix Optimization with Ordinal Constraints}
\author{Sitong Lu\thanks{School of Mathematics and Statistics, Beijing Institute of Technology, Beijing, 100081, P. R. China}
	\and Miao Zhang \thanks{School of Mathematics and Statistics, Beijing Institute of Technology, Beijing, 100081, P. R. China}
	\and Qingna Li\thanks{Corresponding author. This author's research is supported by NSFC 11671036. School of Mathematics and Statistics/Beijing Key Laboratory on MCAACI, Beijing Institute of Technology, Beijing, 100081, P. R. China. Email: qnl@bit.edu.cn}}

\maketitle

\begin{abstract}
Euclidean distance matrix optimization with ordinal constraints (EDMOC) has found important applications in sensor network localization and molecular conformation. It can also be viewed as a matrix formulation of multidimensional scaling, which is to embed $n$ points in a $r$-dimensional space such that the resulting distances follow the ordinal constraints. The ordinal constraints, though proved to be quite useful, may result in only zero solution when too many are added, leaving the feasibility of EDMOC as a question. In this paper, we first study the feasibility of EDMOC systematically. We show that if $r\ge n-2$, EDMOC always admits a nontrivial solution. 
Otherwise, it may have only zero solution. The latter interprets the numerical observations of 'crowding phenomenon'. Next we overcome two obstacles in designing fast algorithms for EDMOC, i.e., the low-rankness and the potential huge number of ordinal constraints. We apply the technique developed in \cite{Zhou2017A} to take the low rank constraint as the conditional positive semidefinite cone with rank cut. This leads to a majorization penalty approach. The ordinal constraints are left to the subproblem, which is exactly the weighted isotonic regression, and can be solved by the enhanced implementation of Pool Adjacent Violators Algorithm (PAVA). Extensive numerical results demonstrate {the} superior performance of the proposed approach over some state-of-the-art solvers.
\end{abstract}

{\bf Keywords}
Euclidean distance matrix, Majorized penalty approach, Feasibility, Nonmetric multidimensional scaling

\section{Introduction}
Euclidean Distance Matrix (EDM) has its deep root in linear algebra \cite{YoungHouseholder,Gower,Schoenberg1935Remarks}. Optimization models based on EDM are widely used in sensor network localization (SNL), molecular conformaiton (MC), multidimensional scaling (MDS) and so on \cite{QiXiuYuan2013,DISCO,Cox}. We refer to \cite{Liberti,Cox,Borg,Dattorro} for the review on EDM and its close relationship with distance geometry, MDS and various applications.

Let $\mathcal S^n$ denote the space of all $n\times n$ symmetric matrices, endowed with the standard inner product. An EDM $D\in\mathcal S^n$ is a matrix whose elements are the squared distances of points $x_1,\dots, x_n\in\Re^r$, i.e., $D_{ij}=\|x_i-x_j\|^2$. Here $r$ is the embedding dimension. EDM optimization is thus to look for an EDM generated by a set of points $\{x_1,\dots,x_n\}\subset\Re^r$ such that the loss function $f(D)$ is minimized. To put it in a general form, we have
\be\label{prob}
\begin{array}{ll}
\min_{D\in\mathcal S^n}& f(D)\\
\hbox{s.t.} & D\hbox{ is an EDM},
\\
& \rank(JDJ)\le r,\\
& D\in\mathcal P.
\end{array}
\ee
Here the rank constraint guarantees that the embedding dimension is no less than $r$. $J=I-\frac1n ee^T$ is the centralization matrix with the identity matrix $I$ and $e=(1,\dots, 1)^T\in\Re^n$. $\mathcal P$ describes extra constraints on $D$, for example, the box constraints
\[
\mathcal P = \mathcal P_B:=\{D\in\mathcal S^n \ | \ L\le D\le U\}
\]
arising from MC \cite{Fang2013Using}, and the ordinal constraints
\[
\mathcal P = \mathcal P_O:=\{D \in\mathcal S^n\ |\ D_{ij}\ge D_{sk},\ (i,j,s,k)\in\mathcal C\}
\]
arising from nonmetrical multidimensional scaling (NMDS) \cite{Cox,Borg,LiQi2017} where $\mathcal C$ is the set of indices for ordinal constraints. In this paper, we are going to study the EDM optimization with ordinal constraints (EDMOC)
 \be\label{prob-0}
\begin{array}{ll}
\min_{D\in\mathcal S^n}& f(D)\\
\hbox{s.t.} & D\hbox{ is an EDM},
\\
& \rank(JDJ)\le r,\\
& D\in\mathcal P_O.
\end{array}
\ee
Specifically, we will investigate the feasibility of EDMOC and propose a fast algorithm for EDMOC with least squares loss function. Below we give a brief review on the research that motivates our work, followed by our contributions and the organization of the paper. We refer to \cite{Cox,Borg,Kruskal1964Multidimensional,Kruskal1964Nonmetric,deLeeuw1977,deLeeuw2009,SMACOF} for other excellent and popular solvers for vector models of (\ref{prob}) including the famous Scaling by MAjorizing a COmplicated Function (\texttt{SMACOF}).

We start with two equivalent ways of characterising an EDM \cite{Schoenberg1935Remarks,YoungHouseholder}, which are
\be\label{EDM-1}
\diag(D)=0, \ \ -JDJ\succeq 0
\ee
and \be
\label{EDM-2}
\diag(D)=0, \ \ -D\in \mathcal{K}^n_+.
\ee
 Here $\diag(D)$ is the vector formed by the diagonal elements of $D$, and $A\succeq 0$ means that $A\in\mathcal S^n$ is a positive semidefinite matrix. $\mathcal{K}_+^n$ is a conditional positive semidefinite cone defined by
\be
\mathcal{K}_+^n=\{D\in \mathcal S^n \ | \ v^TDv\ge0, \ \forall\ v\in \Re^n,\ v^Te=0\}.
\ee
Based on the characterization (\ref{EDM-1}), there is a large body of publications dealing with EDMOC by semidefinite programming (SDP), which is out of the scope of our paper. We refer to \cite{biswas2004semidefinite,DISCO,Fang2013Using,Toh2008}  just to name a few of outstanding SDP based approaches for EDM optimization in SNL and MC. The characterization (\ref{EDM-2}) has fundamental differences from (\ref{EDM-1}) \cite{Qi2013} as it describes an EDM via the conditional positive semidefinite cone, based on which great progress has been made on numerical algorithms for EDM optimization \cite{Qi2013,Qi2014Computing,QiXiuYuan2013,Qi2014,QiDing2015,LiQi2017,Zhou2017A,Qi2018}, as we will detail below.

In \cite{Qi2013}, a semismooth Newton's method was proposed to solve (\ref{prob}) with $r=n$, $f(D)=f^{LS}(D):= \frac12\|D-\Delta\|_F^2 $ and omitting extra constraints $\mathcal P$, i.e., the nearest EDM problem 
\be\label{prob-edm-1}
\begin{array}{ll}
	\min_{D\in\mathcal S^n} & \frac12\|D-\Delta\|_F^2\\
	\hbox{s.t.} & \diag(D)=0, \ -D\in \mathcal{K}^n_+.
\end{array}
\ee
 Here $\Delta = (\delta_{ij})$ was given. The characterization (\ref{EDM-2}) was used, which was the key to the success of semismooth Newton's method for solving the dual problem of (\ref{prob-edm-1}).
A majorized penalty approach \cite{Qi2014} was further proposed to deal with the low dimensional embedding, i.e.,
\be\label{prob-edm-2}
\begin{array}{ll}
	\min_{D\in\mathcal S^n} & \frac12\|D-\Delta\|_F^2\\
	\hbox{s.t.} & \diag(D)=0, \ -D\in \mathcal{K}^n_+,\\
	& \hbox{rank}(JDJ)\le r,
\end{array}
\ee
where $r$ was the prescribed embedding dimension. A penalty function was used to tackle the rank constraint. Note that
 full spectral decomposition was required in order to compute the majorization function of $\hbox{rank}(JDJ)\le r$.

Inspired by \cite{Qi2013,Qi2014Computing}, Li and Qi \cite{LiQi2017} proposed an inexact smoothing Newton method for EDMOC (\ref{prob-0}) with $f=f^{LS}$ and $r=n$. That is, 
\be\label{prob-edm-3}
\begin{array}{ll}
	\min_{D\in\mathcal S^n} & \frac12\|D-\Delta\|_F^2\\
	\hbox{s.t.} & \diag(D)=0, \ -D\in \mathcal{K}^n_+,\\
	&D\in\mathcal P_O. \\
\end{array}
\ee
As pointed out in \cite{LiQi2017}, the ordinal constraints could improve the quality of embedding points. It naturally happens when the ranking of distances is available, which is exactly the situation in EDMOC.

For box constraints, Zhou et al. \cite{Zhou2017A} recently proposed a majorization-minimization approach to solve (\ref{prob}) with $f$ being the Kruskal's minimization function and $\mathcal P = \mathcal P_B$, i.e.,
\be\label{prob-qi}
\begin{array}{ll}
\min_{D\in\mathcal S^n} & \sum_{i,j} W_{ij}(\sqrt{D_{ij}}-\delta_{ij})^2\\
\hbox{s.t.} & D\in\mathcal P_B, \ -D\in \mathcal{K}_+^n(r),
\end{array}
\ee
where $\mathcal{K}_+^n(r)$ is the conditional semidefinite positive cone with rank cut, defined by
\be
\mathcal{K}^n_+(r)=\{D\in\mathcal S^n \ | \ D\in \mathcal{K}^n_+,\ \rank(JDJ)\le r\}.
\ee
Note that different from the approach proposed in \cite{Qi2014}, the rank constraint is represented by the rank cut of conditional positive semidefinite cone, based on which the following equivalent reformulation is proposed for $-D\in \mathcal{K}_+^n(r)$,
\be\label{g-rank}
-D\in \mathcal{K}_+^n(r)\Longleftrightarrow g(D):={\frac12\|D+\Pi_{K^n_+(r)}(-D)\|^2}=0,
\ee
where $\Pi_{\mathcal{K}^n_+(r)}(\cdot)$ denotes a projection onto $\mathcal{K}_+^n(r)$ (See Section \ref{sec3-1} for details.) A majorization function is proposed for $g(D)$, which allows low computational complexity. Based on such technique, the resulting majorization-minimization approach demonstrates superior numerical performance on MC and SNL. Similar technique is used in \cite{Qi2018} where a robust loss function \[f(D) = f^{RS}(D):= \sum_{i<j} w_{ij}|\sqrt{D_{ij}}-\delta_{ij}|\] is considered. {Zhai and Li \cite{ZhaiLi2019} proposed an Accelerating Block Coordinate Descent method (ABCD) for solving (\ref{prob-qi}) with $f = f^{LS}$.}

Coming back to ordinal constraints, as pointed out in \cite{LiQi2017}, a great number of ordinal constraints may lead to only zero feasible solution, which is numerically observed as 'crowding phenomenon'. A simple example is to take $n = 4$, $r = 1$ in (\ref{prob-0}). Consider the feasible solution of the following set
\[
\{D\in\mathcal S^4\ | \ \diag(D)=0, \ -D\in \mathcal{K}_+^4(1)\}\bigcap \mathcal P_O
\] with
\[
\mathcal P_O=\{D\in\mathcal S^4\ | \ D_{23} \ge D_{12} \ge D_{13} \ge D_{14} \ge D_{34} \ge D_{24} \}.
\]
 We can see that a feasible EDM $D$ satisfying the first four ordinal constraints
\[D_{23} \ge D_{12} \ge D_{13} \ge D_{14} \ge D_{34}\]
can be generated by the corresponding points $x_1,\dots, x_4\in\Re$ as shown in Fig. \ref{fig:counterexample}. However, by adding the last ordinal constraint $ D_{34} \ge D_{24}$, all points collapse to one point
 in order to satisfy the ordinal constraints. In other words, there is no feasible EDM except the zero matrix.
 A natural question thus arises: under what condition does EDMOC admit a nonzero feasible solution? On the other hand, ordinal constraints, as well as the rank constraint, also bring challenges in algorithm design. Consequently, a fast numerical algorithm for EDMOC is still highly in need. It is these observations that motivate the work in this paper. Our main contributions are as follows.
 \begin{figure}[htbp]
	\centering
	\includegraphics[width=4.5cm,height=1cm]{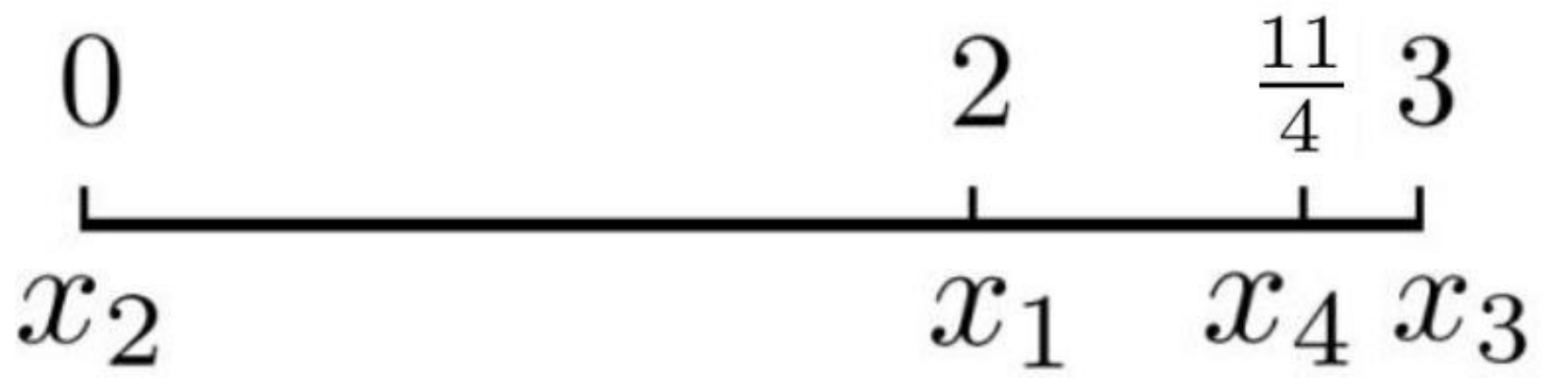}
	\caption{Points generating an EDM $D$ satisfying $D_{23} \ge D_{12} \ge D_{13} \ge D_{14} \ge D_{34}$ in $\Re$.}
	\label{fig:counterexample}	
\end{figure}

 {\bf Our Contributions.} In this paper, we study the equivalent form of EDMOC (\ref{prob-0}), which is
 \be\label{prob-1}
 \begin{array}{ll}
 \min_{D\in\mathcal S^n} & f(D)\\
 \hbox{s.t.} & \diag(D)=0, \ -D\in \mathcal{K}_+^n(r),\\
 & D\in \mathcal P_O.
 \end{array}
 \ee
We study the case where $\mathcal P_O$ describes the full ordinal constraints defined by
 \be\label{Po}
P_O=\{D\in\mathcal S^n \ | \ D_{i_1j_1}\ge D_{i_2j_2}\ge\cdots\ge D_{i_mj_m}\}
 \ee
where $m = \frac{n(n-1)}{2}$, $(i_1,j_1),\ \dots,\ (i_m,j_m)$ are distinct indices of off-diagonal elements in $D$ and $i_1<j_1,\dots, i_m<j_m$.

 The first contribution is that we systematically study the feasibility of (\ref{prob-1}). The main results are as follows.
 \bit
 \item Without rank constraint, (\ref{prob-1}) always admits a nonzero feasible solution.
 \item {$r\ge n-2$}: (\ref{prob-1}) always admits a nontrivial feasible solution (See Section \ref{sec2-1} for the definition).
 \item $r<n-2$: (\ref{prob-1}) may only have zero feasible solution, as shown above by the example of $n = 4,\ r = 1$.
 \eit

 Our second contribution is to develop a fast algorithm for solving (\ref{prob-1}). We tackle the rank constraint by using the technique (\ref{g-rank}) proposed in \cite{Zhou2017A}. The ordinal constraints are left to the subproblem which is exactly the weighted isotonic regression. One advantage of the resulting majorization penalty approach is that the majorization function based on $g(D)$ allows low computational complexity which can speed up the solver. Another advantage is that the huge number of ordinal constraints are tackled within {weighted} isotonic regression, which {can be solved by} an enhanced implementation of PAVA \cite{Kruskal1964Nonmetric,barlow1972statistical}.
 Our extensive numerical results on SNL and MC verify the great performance of the proposed algorithm compared with some state-of-the-art solvers.

The organization of the paper is as follows. In Section \ref{sec-2}, we discuss the feasibility of EDMOC (\ref{prob-1}). In Section \ref{sec-3}, we propose the majorized penalty approach for (\ref{prob-1}). In Section \ref{sec-5}, extensive numerical tests are conducted to verify the efficiency of the proposed algorithm. Final conclusions are given in Section \ref{sec-6}.

{\bf Notations.} We use $|A|$ to denote the number of elements in a set $A$. We use $\|\cdot\|$ to denote the Frobenius norm for matrices and $l_2$ norm for vectors.  Let $\Diag(x)$ be the diagonal matrix with diagonal elements {coming} from vector $x$ {and '$\circ$' be the Hadamard product.}

\section{Feasibility of EDMOC}\label{sec-2}
In this section, we will discuss the feasibility issue of EDMOC systematically. We start with a formal statement of the feasibility problem, then some preliminary properties and the main results for feasibility.
\subsection{Statement of Feasibility}\label{sec2-1}
To better define the feasibility of EDMOC (\ref{prob-1}), we introduce the following notes to represent the full ordinal constraints in (\ref{Po}). Let
\[
\bar\pi(n)=\{(1,2), (1,3), \dots, (1,n), (2,3),(2,4),\dots, (2,n), \dots, (n-1,n)\}.
\]
The collections of all permutations of $\bar\pi(n)$ is denoted by $\Pi(n)$, i.e.,
\[
\Pi(n)=\{\pi(n)\ | \ \pi(n)\hbox{ is a permutation of }\bar \pi(n)\}.
\]
Given
\be\label{pi}
\pi(n)=\{(i_1,j_1), \dots, (i_m,j_m)\},
\ee
it represents the indices of the full ordinal constraints in the following way
\be\label{group}
 D_{i_1j_1}\ge D_{i_2j_2}\ge\cdots\ge D_{i_mj_m}.
\ee
We refer to the full ordinal constraints (\ref{group}) as a group of ordinal constraints given by $\pi(n)$ in (\ref{pi}) (Without causing any chaos, we also refer to $\pi(n)$ as a group of ordinal constraints). The feasible set with respect to ordinal constraints $\pi(n)$ is denoted as
 \be\label{Pon}
\Omega_{\pi(n)}=\{D\in\mathcal S^n \ | \ D_{i_1j_1}\ge D_{i_2j_2}\ge\cdots\ge D_{i_mj_m}\}.
 \ee
 Let $E(r)$ be the set of EDM with embedding dimension not exceeding $r$, i.e.,
 \be
 E(r) = \{D\in\mathcal S^n \ | \ \diag(D)=0,\ -D\in \mathcal{K}_+^n(r)\}.
 \ee
 The feasible set of (\ref{prob-1}) is recast as
 \be
 E(r)\bigcap \Omega_{\pi(n)} =: F(\pi(n),r).
 \ee
 If there is no rank constraint, we denote by
 \[
 F(\pi(n))=E\bigcap \Omega_{\pi(n)} , \hbox{ where }E = \{D\in\mathcal S^n \ | \ \diag(D)=0,\ -D\in \mathcal{K}_+^n\}.\]

 A nontrivial solution of $F(\pi(n), r)$ is defined below.

 \begin{definition} For a nonzero feasible solution
 $D$ of $F(\pi(n), r)$, if
 there exist at least two off-diagonal elements $D_{ij},\ D_{kl}$ such that $D_{ij} \ne D_{kl},$ 
  then $D$ is a nontrivial solution.
\end{definition}

The feasibility of (\ref{prob-1}) can be cast as the following questions:
\bit
\item [Q1:] Given $r$ and $\pi(n)\in\Pi(n)$, does $F(\pi(n), r)$ have a nonzero solution?
\item [Q2:] Given $r$ and $\pi(n)\in\Pi(n)$, does $F(\pi(n), r)$ have a nontrivial solution?
\eit

To explain the difference between nonzero solutions and nontrivial solutions, we need the classical multidimensional scaling (cMDS) to allow us to get a set of points from an EDM $D\in\mathcal S^n$. Firstly, conduct spectral decomposition for $-\frac12JDJ$ as
\be\label{cmds1}
-\frac12JDJ = P_1\Diag(\lambda_1,\dots, \lambda_r)P_1^T,
\ee
where $\lambda_1\ge\dots\ge\lambda_r>0$ are the positive eigenvalues, and $P_1\in\Re^{n\times r}$ consists of the corresponding eigenvectors as columns, $r$ is referred {to} as the embedding dimension. Then the embedding points $x_1, \dots, x_n\in\Re^r$ can be obtained by
\be\label{cmds2}
X:=[x_1,\dots, x_n] = \Diag(\lambda_1^{\frac12},\dots,\lambda_r^{\frac12})P_1^T\in\Re^{r\times n}.
\ee
Due to the fact that $J$ has a zero eigenvalue with eigenvecter $e$, there is $r\le n-1$. In other words, $E(n-1)=E$. Moreover,  it is trivial that
\[
E(r_1)\subseteq E(r_2),  \ \hbox{if } r_1\le r_2.
\]
We refer to \cite{Gower,Schoenberg1935Remarks,Torgerson1952Multidimensional,YoungHouseholder,Borg,Dattorro} for detailed description of cMDS and its generalizations.

 Based on cMDS, we have the following observation, which is crucial in our subsequent analysis.

\begin{proposition}\label{prop-1}
Let $\pi(n)$ take the form as (\ref{pi}). $D$ is a feasible solution of $F(\pi(n),r)$ if and only if there exist $x_1,\dots, x_n\in\Re^r$ such that
\be\label{x-order}
\|x_{i_1}-x_{j_1}\|\ge\|x_{i_2}-x_{j_2}\|\ge\cdots\ge\|x_{i_m}-x_{j_m}\|.
\ee
\end{proposition}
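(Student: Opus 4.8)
The plan is to reduce the statement to the classical Schoenberg--Young--Householder identification of $E(r)$ with configurations of $n$ points in $\Re^r$, and then to exploit that $t\mapsto\sqrt t$ is order-preserving on $[0,\infty)$, so that an ordering of squared distances is equivalent to an ordering of distances. Once we know that $D\in E(r)$ holds precisely when $D_{ij}=\|x_i-x_j\|^2$ for some $x_1,\dots,x_n\in\Re^r$, the membership $D\in\Omega_{\pi(n)}$ translates verbatim into (\ref{x-order}) up to squaring, and both directions follow.

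For the ``if'' direction, I would suppose points $x_1,\dots,x_n\in\Re^r$ satisfy (\ref{x-order}) and set $D_{ij}:=\|x_i-x_j\|^2$, then verify the three defining properties of $F(\pi(n),r)$ in turn. First, $\diag(D)=0$ is immediate. Second, $-D\in\mathcal{K}_+^n$ follows from the characterization (\ref{EDM-2}) of an EDM generated by a point set. Third, $\rank(JDJ)\le r$ because $-\frac12 JDJ$ is the Gram matrix of the centered configuration $\{x_i-\bar x\}$, whose columns lie in $\Re^r$. Finally, since every $D_{ij}\ge 0$, squaring the chain (\ref{x-order}) yields $D_{i_1j_1}\ge\cdots\ge D_{i_mj_m}$, i.e. $D\in\Omega_{\pi(n)}$. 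Hence $D\in E(r)\cap\Omega_{\pi(n)}=F(\pi(n),r)$.

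For the ``only if'' direction, I would start from $D\in F(\pi(n),r)$, so in particular $D\in E(r)$, and apply cMDS as in (\ref{cmds1})--(\ref{cmds2}) to recover points $x_1,\dots,x_n$ with $D_{ij}=\|x_i-x_j\|^2$. The rank cut $\rank(JDJ)\le r$ is exactly what guarantees that at most $r$ eigenvalues of $-\frac12 JDJ$ are positive, so the recovered points may be taken in $\Re^r$, padding with zero coordinates if the true embedding dimension is strictly smaller. The hypothesis $D\in\Omega_{\pi(n)}$ gives $D_{i_1j_1}\ge\cdots\ge D_{i_mj_m}$, and taking square roots, legitimate since each $D_{ij}\ge 0$, produces precisely (\ref{x-order}).

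The routine parts are the monotonicity of squaring and of the square root, together with $\diag(D)=0$. The one place requiring care is the rank--embedding-dimension correspondence, namely that $\rank(JDJ)\le r$ is equivalent to the existence of a generating configuration living in $\Re^r$; this is the content of the Schoenberg characterization combined with cMDS, and I would invoke (\ref{EDM-2}), (\ref{cmds1}) and (\ref{cmds2}) rather than reprove it.
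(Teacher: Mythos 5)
Your proposal is correct and follows essentially the same route the paper intends: the paper states this proposition as an observation ``based on cMDS'' without writing out a proof, and your argument simply makes explicit the Schoenberg/Young--Householder correspondence between $E(r)$ and point configurations in $\Re^r$ via (\ref{cmds1})--(\ref{cmds2}), together with the monotonicity of squaring to pass between ordered distances and ordered squared distances. Both directions are handled correctly, including the rank-cut/embedding-dimension equivalence, so there is nothing to add.
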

{{\bf Remark.}}
Based on Definition \ref{prop-1}, a nonzero feasible solution of $F(\pi(n),r)$ corresponds to a set of points in $\Re^r$ where at least two points are different from each other. A nontrivial feasible solution of $F(\pi(n),r)$ corresponds to a set of points $\Re^r$ where at least two pairwise distances are different. Obviously, a nontrivial feasible solution must be a nonzero solution, but conversely, it is not necessarily true.

\subsection{Preliminary Properties of Ordinal Constraints}
Before presenting the main results, we need to take a further look at different groups of ordinal constraints. We will {illustrate} in this part that some groups of ordinal constraints are actually equivalent to each other. This is based on the observation that for a set of points, changing the labels will result in some corresponding changes in EDM. We summarize it in the following proposition.
\begin{proposition}\label{prop-2}
 Given $x_1,\dots,x_n \in \Re^{r}$, let $D\in\mathcal S^n$ be the corresponding EDM. Let $P\in\Re^{n\times n}$ be any permutation matrix, {i.e., each row and column  of $P$ has only one element equal to $1$ and  others are $0$.}.
 Suppose $\hat x_1,\dots, \hat x_n\in\Re^r$ are given by {\be\label{eq-X}\widehat{X}=[\hat{x}_1,\hat{x}_2,\dots,\hat{x}_n]=
 XP, \hbox{ where } X=[x_1,x_2,\dots ,x_n].\ee}
Denote the EDM given by $\hat x_1,\dots, \hat x_n$ as $\widehat D$. There is
\[
\widehat{D}=P^TDP.
\]
\end{proposition}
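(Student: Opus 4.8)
The plan is to reduce the whole statement to the way right-multiplication by $P$ reindexes the columns of $X$, and then verify the identity at the matrix level through the Gram matrix. First I would fix a convention, which is the one place that genuinely needs care: since $\widehat{X}=XP$ and right-multiplication by a permutation matrix permutes columns, there is a permutation $\tau$ of $\{1,\dots,n\}$ with $\hat x_i=x_{\tau(i)}$, equivalently $P_{ki}=1$ exactly when $k=\tau(i)$. I would pin this down before any computation, because the two natural conventions ($P$ acting on columns via $\tau$ versus $\tau^{-1}$) differ, and I must make sure that the action defining $\widehat{X}=XP$ is compatible with the conjugation $P^TDP$ that appears on the right-hand side.

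Given that convention, a quick entrywise route already settles the claim: by definition $\widehat D_{ij}=\|\hat x_i-\hat x_j\|^2=\|x_{\tau(i)}-x_{\tau(j)}\|^2=D_{\tau(i)\tau(j)}$, while expanding $(P^TDP)_{ij}=\sum_{k,\ell}P_{ki}D_{k\ell}P_{\ell j}$ and using that $P_{ki}=1$ only for $k=\tau(i)$ and $P_{\ell j}=1$ only for $\ell=\tau(j)$ collapses the double sum to the single surviving term $D_{\tau(i)\tau(j)}$. Matching these two expressions for all $i,j$ gives $\widehat D=P^TDP$.

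The version I would actually prefer to write out avoids carrying indices and passes through the Gram matrix. Let $B=X^TX$, so $B_{ij}=\langle x_i,x_j\rangle$, and the standard EDM--Gram identity reads $D=\diag(B)e^T+e\,\diag(B)^T-2B$, where $\diag(B)$ is the column vector of diagonal entries. Then $\widehat B=\widehat X^T\widehat X=(XP)^T(XP)=P^TBP$, and I would invoke two elementary facts: $P^Te=e$ (a permutation fixes the all-ones vector), and $\diag(P^TBP)=P^T\diag(B)$ (conjugation by a permutation merely permutes the diagonal). Substituting $\widehat B=P^TBP$ into the identity for $\widehat D$, and then rewriting $e^T=e^TP$ and $e=P^Te$, lets me factor $P^T$ on the left and $P$ on the right out of all three terms, yielding $\widehat D=P^T\big[\diag(B)e^T+e\,\diag(B)^T-2B\big]P=P^TDP$.

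The proposition is essentially bookkeeping, so I do not expect a real obstacle; the only thing to get right is the permutation convention established in the first step, after which either route is a short and routine computation. I would therefore state that convention explicitly at the outset so that a reader can immediately check that $\widehat X=XP$ and $\widehat D=P^TDP$ encode compatible actions of $P$.
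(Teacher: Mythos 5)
Your entrywise argument is exactly the paper's proof: the paper writes $P=[e_{t_1},\dots,e_{t_n}]$, observes $\hat x_i=Xe_{t_i}=x_{t_i}$, and matches $\widehat D_{ij}=D_{t_it_j}=(P^TDP)_{ij}$, which is your permutation $\tau$ in different notation, with the convention pinned down just as you insist. The Gram-matrix route you say you would prefer is also correct but is additional machinery the paper does not use; either version settles the claim.
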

\begin{proof}{Denote \[
P = [e_{t_1},\dots, e_{t_n}],
\]
where $e_k\in\Re^n$ denotes the $k$-th column of identity matrix.  With (\ref{eq-X}), we have $\hat x_i= Xe_{t_i}=x_{t_i}$. Therefore,
\[
\widehat D_{ij} = \|\hat x_i-\hat x_j\|^2 = \|x_{t_i}-x_{t_j}\|^2 = D_{t_it_j}.
\]
Together with
\[
(P^TDP)_{ij} = e_{t_i}^TDe_{t_j} = D_{t_it_j},
\]
we obtained that $\widehat{D}=P^TDP$.
}
\end{proof}
Based on Definition \ref{prop-2}, we define the equivalence between two groups of ordinal constraints as follows.
\begin{definition}
We say a group of ordinal constraints $\pi(n)$ is equivalent to another group of ordinal constraints $\pi'(n)$ $($denoted by $\pi(n)\sim \pi'(n)$$)$ if there exists a permutation matrix $P$ such that
\[
{P^TDP}\in \Omega_{\pi'(n)}, \ \forall\ D\in \Omega_{\pi(n)}.
\]
An equivalent class for some groups of ordinal constraints (denoted by $\mathcal O(n)$), is the collection of all groups of ordinal constraints that are equivalent to each other.
\end{definition}

With the definition of equivalent classes, the collections of all groups of ordinal constraints can be viewed as the union of {all} equivalent classes of ordinal constraints. That is,
\[
\Pi(n)= \bigcup_{i =1}^M \mathcal O^i(n).
\]
where $M $ is the number of equivalent classes of ordinal constraints. Furthermore, for each $\mathcal O^i(n)$, there is $|\mathcal O^i(n)|=n!$, which is the number of different ways {to label} $n$ points. This gives the number of equivalent classes as
\[
M = (\frac{n(n-1)}{2})!/n!.
\]

Below we show a simple example.

{\bf Example 2.1.}
Let $n=3$. There is
\[
\Pi(3)={\{\pi^1(3), \dots, \pi^6(3)\}}
\]where the six groups of ordinal constraints are
\begin{eqnarray*}
\pi^1(3) = \{(1,2),\ (1,3),\ (2,3)\},\ & \pi^2(3)=\{(1,2),\ (2,3),\ (1,3)\},&\\
\pi^3(3) = \{(1,3),\ (1,2),\ (2,3)\},\ & \pi^4(3)=\{(1,3),\ (2,3),\ (1,2)\},&\\
\pi^5(3) = \{(2,3),\ (1,3),\ (1,2)\},\ & \pi^6(3)=\{(2,3),\ (1,2),\ (1,3)\}.&
\end{eqnarray*}
It can be verified that
$\pi^i(3)$ is equivalent to $\pi^1(3)$, $i=2,\dots,6$, with permutation matrix $P^i$ given as follows (See Fig. {\ref{fig:triangle} } for corresponding points which generate a feasible EDM $D^i=(P^i)^TD^1P^i$, $i=2,\dots,6$).
\[
P^2=[e_2,e_1,e_3],\ P^3=[e_1,e_3,e_2],\ P^4=[e_3,e_1,e_2],\]\[ P^5=[e_3,e_2,e_1],\ P^6=[e_2,e_3,e_1].
\]
Consequently, all elements in $\Pi(3)$ are equivalent to each other, i.e., all different types of the ordinal constraints are equivalent for $n=3$. In other words,

\[
\Pi(3)= \mathcal O^1(3),
\]
where $|\mathcal O^1(n)|=6$.

\begin{figure}[htbp]
	\centering
	\subfigure[]{
		\label{triangle231}
		\includegraphics[height=2.0cm,width=2.8cm]{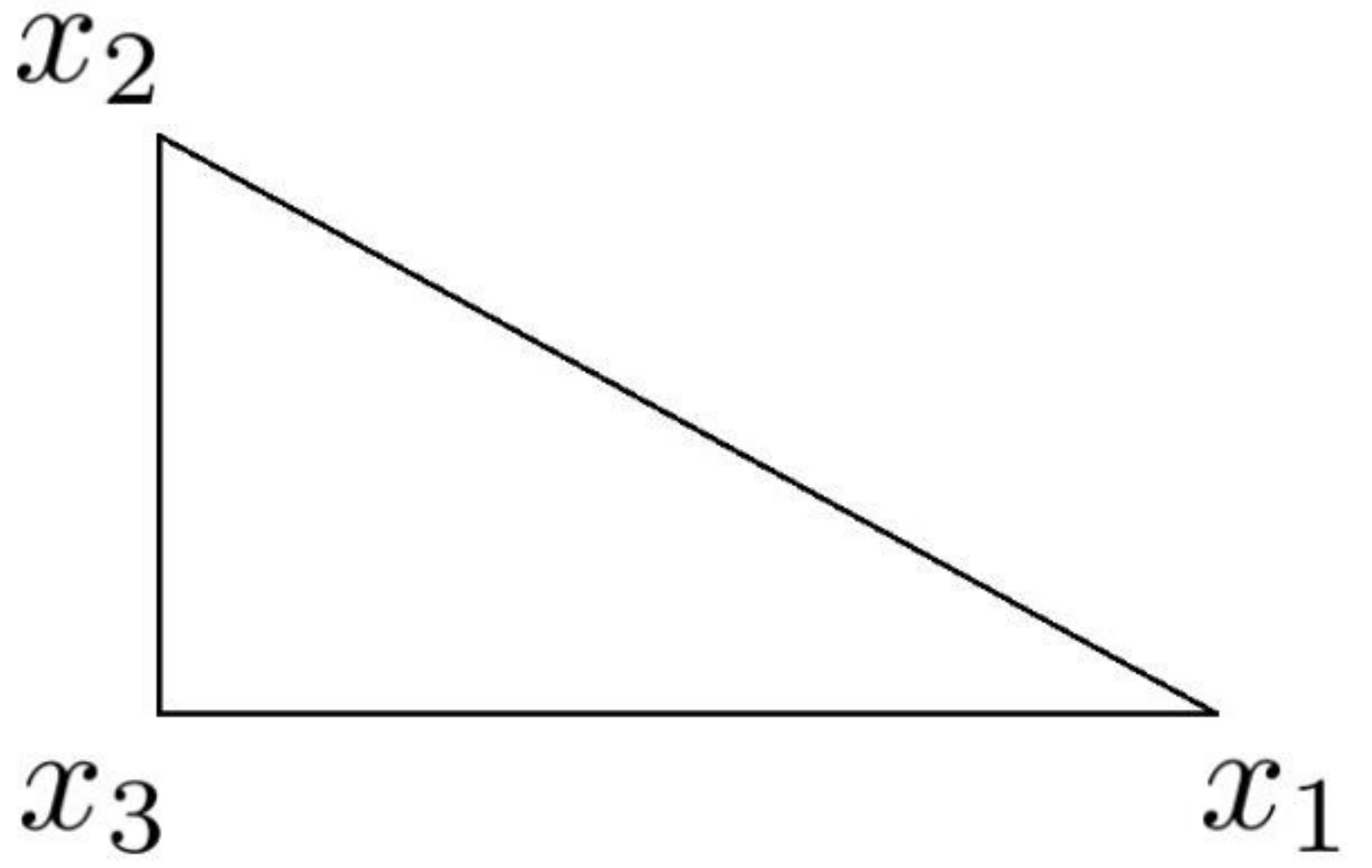}}
	\hspace{0.5in}
	\subfigure[]{
		\label{triangle132}
		\includegraphics[height=2.0cm,width=2.8cm]{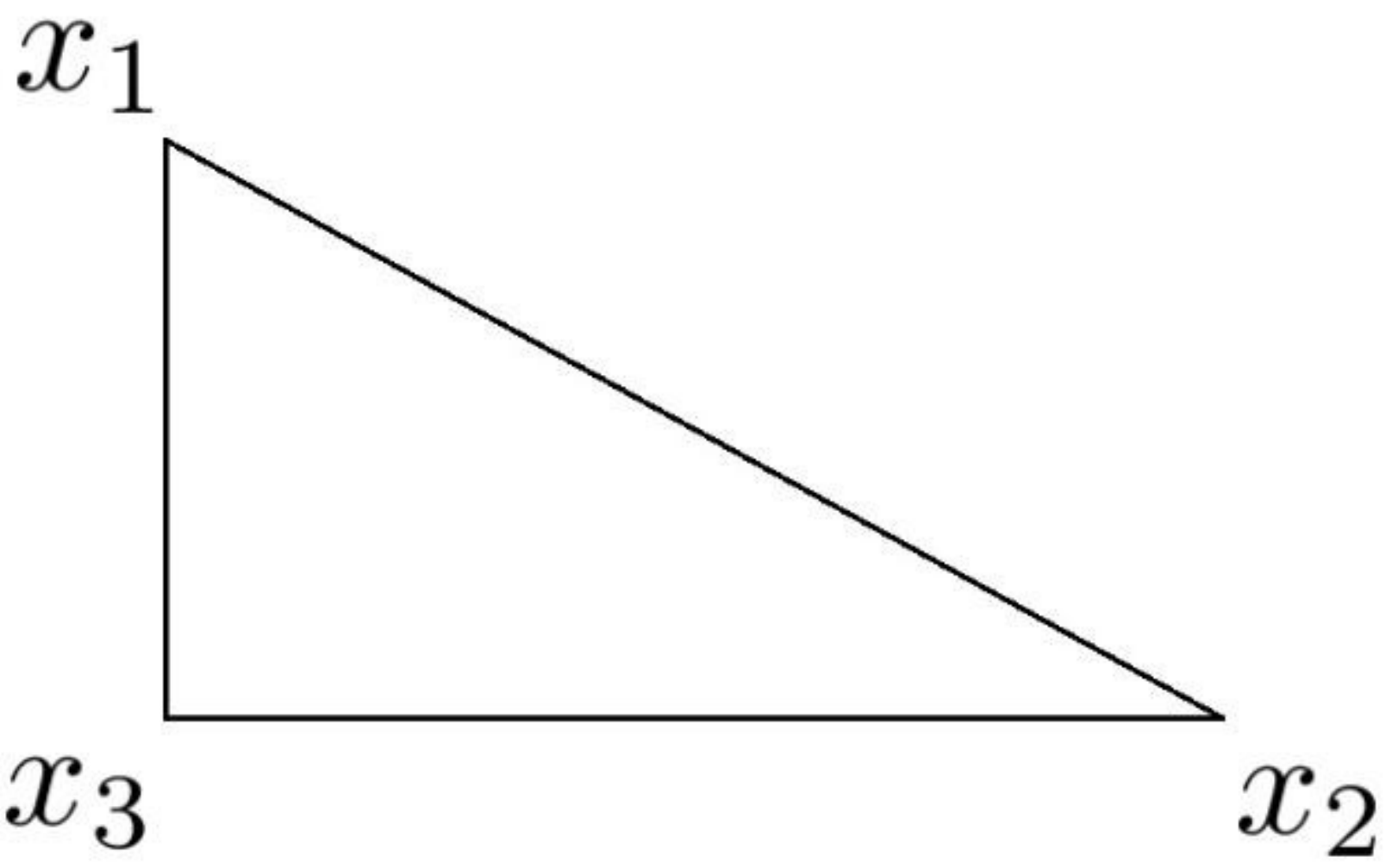}}
	\hspace{0.5in}
	\subfigure[]{
		\label{triangle321}
		\includegraphics[height=2.0cm,width=2.8cm]{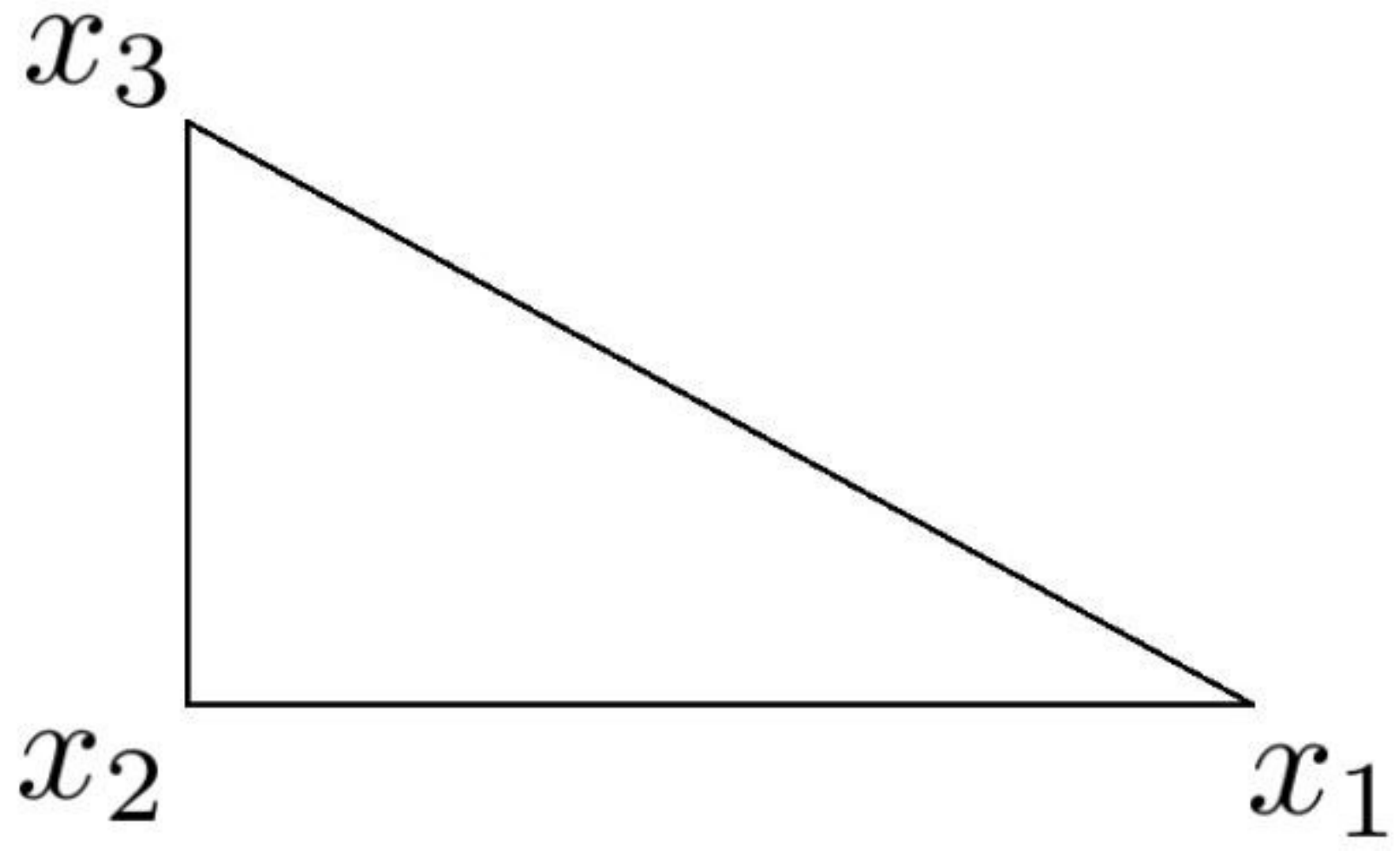}}
	\subfigure[]{
		\label{triangle123}
		\includegraphics[height=2.0cm,width=2.8cm]{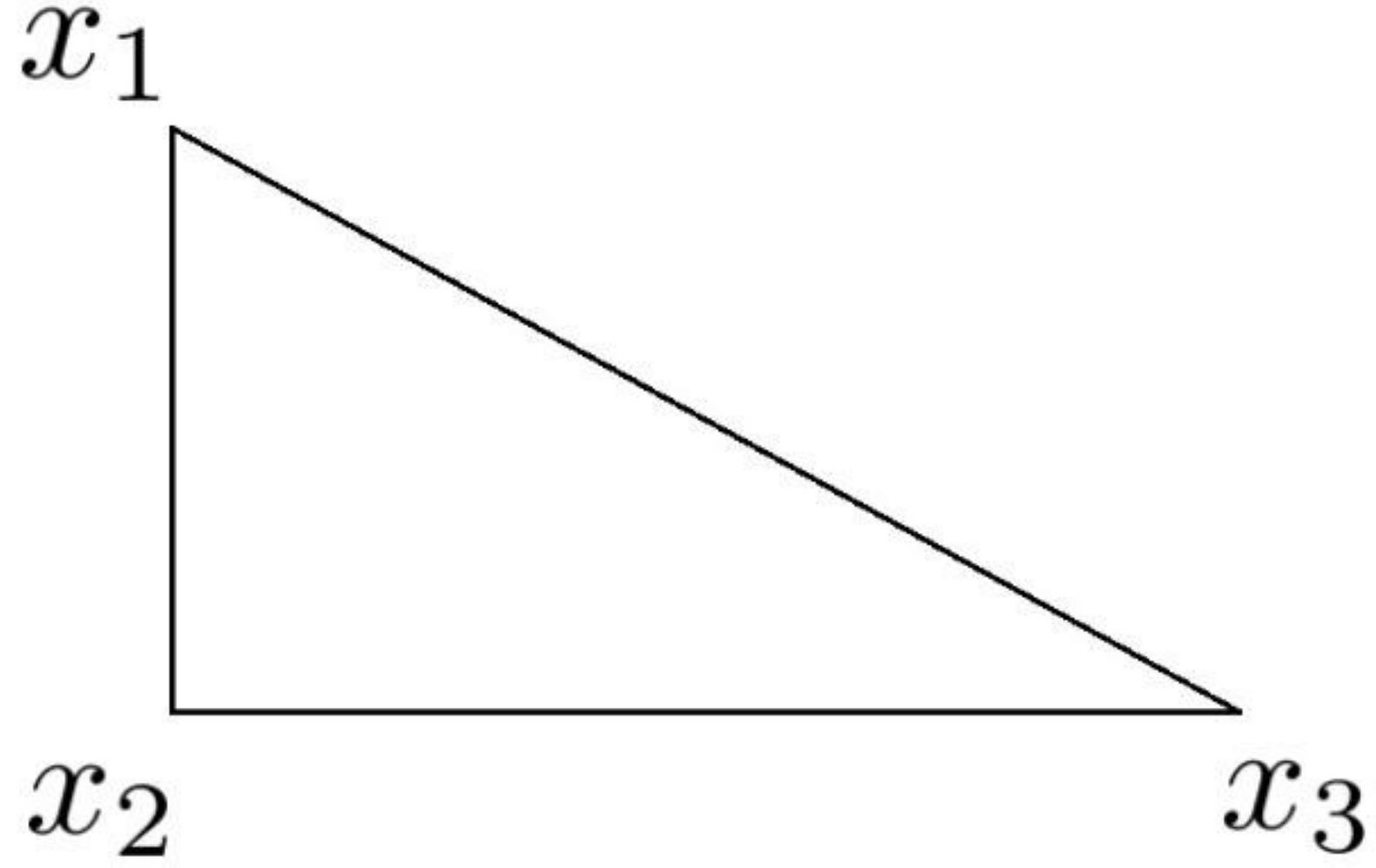}}
	\hspace{0.5in}
	\subfigure[]{
		\label{triangle213}
		\includegraphics[height=2.0cm,width=2.8cm]{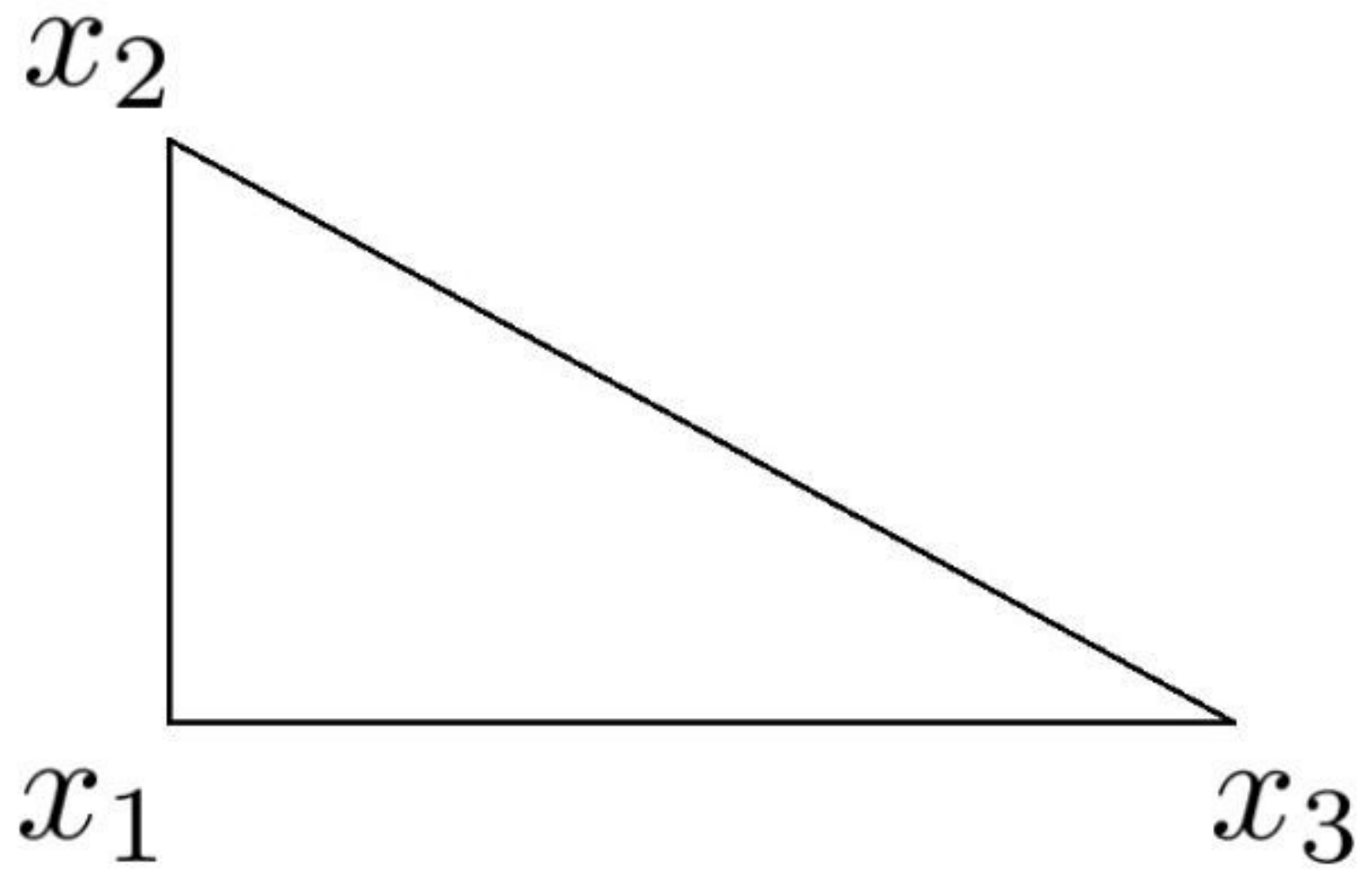}}
	\hspace{0.5in}
	\subfigure[]{
		\label{triangle312}
		\includegraphics[height=2.0cm,width=2.8cm]{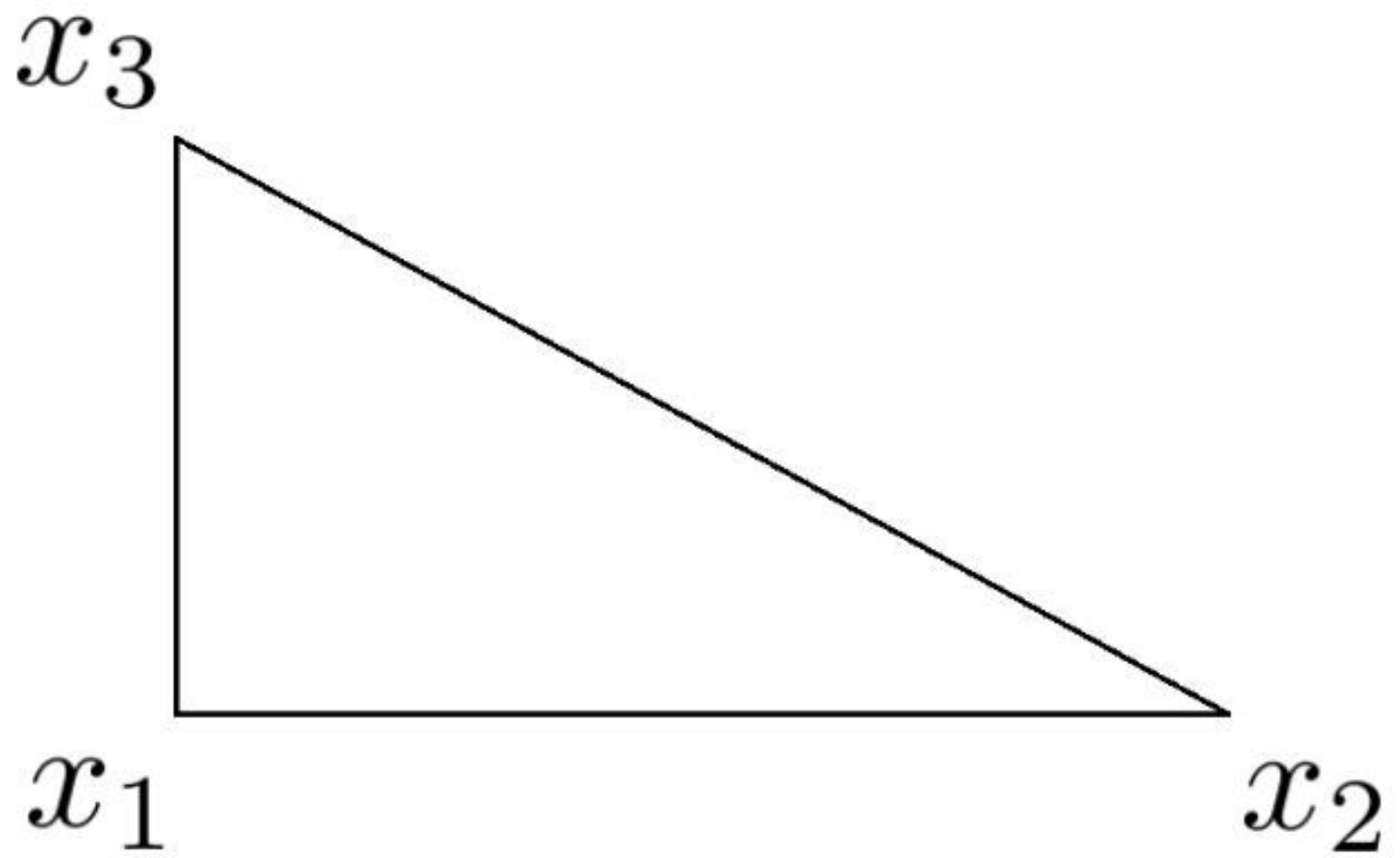}}
	\caption{Points generating an EDM $D^i$, $i=1,\dots,6$ for $(a)$ to $(f)$}	
	\label{fig:triangle}
\end{figure}

The following results are trivial with respect to equivalent classes of ordinal constraints.
\begin{lemma}\label{lem-1}
For two groups of ordinal constraints $\pi^1(n)$ and $\pi^2(n)$ with $\pi^1(n)\sim \pi^2(n)$,
 $F(\pi^1(n),r)$ admits a nonzero solution (or nontrivial solution) if and only if $F(\pi^2(n),r)$ does.

\end{lemma}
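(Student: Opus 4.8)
The plan is to transport feasible solutions back and forth between the two groups using the permutation similarity that witnesses $\pi^1(n)\sim\pi^2(n)$, and to verify that this map preserves the three features at stake: membership in the EDM-with-rank-cut set $E(r)$, being nonzero, and possessing two distinct off-diagonal entries. By definition of $\sim$ there is a permutation matrix $P$ with $P^TDP\in\Omega_{\pi^2(n)}$ for every $D\in\Omega_{\pi^1(n)}$. I would introduce the linear map $\phi(D)=P^TDP$ on $\mathcal S^n$; since $P^{-1}=P^T$, this is a bijection with inverse $D\mapsto PDP^T$, and it already carries $\Omega_{\pi^1(n)}$ into $\Omega_{\pi^2(n)}$.

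The one step that actually requires a computation is showing $\phi$ preserves $E(r)$. Zero diagonals are clearly preserved, since a permutation similarity only relabels the diagonal. For the cone-with-rank-cut part the key observation is that $P$ fixes $e$ (each column of $P$ is a standard basis vector, so $Pe=e$ and $P^Te=e$), whence $P$ commutes with the centering matrix: $PJ=JP$, equivalently $P^TJ=JP^T$. Consequently
\[
J(P^TDP)J=(P^TJ)D(JP)=P^T(JDJ)P,
\]
which is a permutation (hence orthogonal) similarity of $JDJ$. Such a similarity preserves eigenvalues, inertia and rank, so $-P^TDP\in\mathcal K_+^n\iff -D\in\mathcal K_+^n$ and $\rank\big(J(P^TDP)J\big)=\rank(JDJ)$. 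Therefore $\phi(E(r))=E(r)$, and together with the previous paragraph $\phi\big(F(\pi^1(n),r)\big)\subseteq F(\pi^2(n),r)$. Because $\phi$ merely permutes the off-diagonal coordinates, it in fact maps the sorted cone $\Omega_{\pi^1(n)}$ bijectively onto $\Omega_{\pi^2(n)}$ (equivalently, $\sim$ is symmetric), so $\phi$ restricts to a bijection between $F(\pi^1(n),r)$ and $F(\pi^2(n),r)$.

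It then remains only to see that this bijection respects the two notions of solution. Nonzeroness is immediate from invertibility: $\phi(D)=0\iff D=0$. For nontriviality I would invoke Proposition \ref{prop-2}, which gives $\widehat D_{ij}=D_{t_it_j}$ when $P=[e_{t_1},\dots,e_{t_n}]$; thus $\phi$ simply permutes the off-diagonal entries of $D$, leaving their multiset of values unchanged, so $\phi(D)$ has two distinct off-diagonal entries exactly when $D$ does. Combining these, $F(\pi^1(n),r)$ contains a nonzero (respectively nontrivial) solution if and only if $F(\pi^2(n),r)$ does. I do not anticipate a genuine obstacle—the statement is essentially bookkeeping, as the paper notes—so the only point meriting care is the commutation identity $PJ=JP$, which is precisely what allows the positive-semidefiniteness and rank conditions to survive the relabeling.
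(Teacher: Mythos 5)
Your argument is correct and is precisely the computation the paper leaves out when it declares Lemma \ref{lem-1} ``trivial'': the permutation similarity $D\mapsto P^TDP$ from Proposition \ref{prop-2} carries $\Omega_{\pi^1(n)}$ into $\Omega_{\pi^2(n)}$ by definition of $\sim$, and your identity $PJ=JP$ (from $Pe=e$) is exactly the observation needed to see that it also preserves $E(r)$, nonzeroness, and the multiset of off-diagonal values (hence nontriviality), with invertibility of the map giving the converse direction. Nothing further is needed.
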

	
For Example 2.1, we conclude that
$F(\pi(3), 2)$ admits a nontrivial solution for any {$\pi(3)\in\Pi(3)$.}

\subsection{Main Results}\label{sec2-3}
Now we are ready to give the main results for the feasibility of EDMOC (\ref{prob-1}).
The following theorem partly answers question Q1.
\begin{theorem}[No Rank Constraint]\label{thm-1}
$F(\pi(n))$ admits a nonzero feasible solution for any $\pi(n)\in\Pi(n)$.
\end{theorem}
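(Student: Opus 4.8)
The plan is to reduce the statement, via Proposition \ref{prop-1}, to the purely geometric task of realizing the prescribed ordering of pairwise distances by a point configuration. Since Theorem \ref{thm-1} concerns the case without rank constraint, where $E = E(n-1)$, I am free to place the points in $\Re^{n-1}$ (indeed every admissible EDM arises from such a point set via cMDS). By Proposition \ref{prop-1}, exhibiting a nonzero $D \in F(\pi(n))$ is then equivalent to exhibiting points $x_1, \dots, x_n \in \Re^{n-1}$, not all coincident, with
\[
\|x_{i_1} - x_{j_1}\| \ge \|x_{i_2} - x_{j_2}\| \ge \cdots \ge \|x_{i_m} - x_{j_m}\|.
\]

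The key observation I would exploit is that the ordinal constraints are \emph{non-strict} inequalities, so any configuration whose pairwise distances are all equal satisfies every group $\pi(n) \in \Pi(n)$ simultaneously. Concretely, I would take $x_1, \dots, x_n$ to be the vertices of a regular simplex, mutually equidistant; such a configuration exists in $\Re^{n-1}$ because $n$ points span an affine subspace of dimension at most $n-1$ (for instance the isometric image of the standard basis $e_1,\dots,e_n \in \Re^n$, whose affine hull is $(n-1)$-dimensional). Writing the common squared distance as $c > 0$, one has $\|x_{i_1} - x_{j_1}\| = \cdots = \|x_{i_m} - x_{j_m}\| = \sqrt c$, so the chain of inequalities holds with equality throughout, no matter how $\pi(n)$ orders the pairs.

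To finish, I would let $D$ be the EDM generated by this configuration, i.e. $D_{ij} = c$ for $i \ne j$ and $D_{ii} = 0$. By Proposition \ref{prop-1} it follows that $D \in F(\pi(n),n-1) = F(\pi(n))$, and since $c > 0$ we have $D \ne 0$, yielding the desired nonzero feasible solution. Note that appealing to Lemma \ref{lem-1} is unnecessary, since the equal-distance configuration is uniform over every $\pi(n)$ and hence handles all equivalence classes at once.

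I do not expect a genuine obstacle for the nonzero case: the only subtlety is recognizing that equal distances already satisfy the weak ordinal inequalities, which collapses the seemingly combinatorial dependence on $\pi(n)$ to a single construction. The real difficulty is reserved for the stronger statements that follow -- producing a \emph{nontrivial} solution (with at least two distinct distances) and separating the regimes $r \ge n-2$ and $r < n-2$ under the rank constraint -- where the equidistant trick no longer applies and one must instead build a configuration whose distances are genuinely ordered while keeping the embedding dimension controlled.
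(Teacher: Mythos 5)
Your proposal is correct and follows essentially the same route as the paper: both take the vertices of a regular $(n-1)$-dimensional simplex in $\Re^{n-1}$, observe that all pairwise distances are equal so every group of (non-strict) ordinal constraints holds with equality, and conclude the resulting EDM is a nonzero feasible solution. The only cosmetic difference is that you construct the simplex explicitly from the standard basis, whereas the paper cites a reference for its existence.
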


\begin{proof}
 By \cite{elte1912semiregular}, there exist $x_1,\dots, x_n\in\Re^{n-1}$ such that they form an ($n-1$)-dimensional regular simplex. In other words, there is
 \begin{displaymath}
 	\|x_i-x_j\|=\|x_s-x_k\|>0,\ \forall\ i\neq j,\ s\neq k.
 \end{displaymath}
Based on this result, for any $\pi(n)\in\Pi(n)$, the resulting EDM $D$ generated by the above $x_1,\dots, x_n$ is a feasible nonzero {solution} of $F(\pi(n))$, where the ordinal constraints in $\pi(n)$ actually hold with equality for such $D$. The proof is finished.
\end{proof}

\begin{theorem}\label{thm-2}(With Rank Constraint)
There exists at least one equivalent class of ordinal constraints $\mathcal O(n)$ such that for any $\pi(n)\in\mathcal O(n)$ {and any $r$}, $F(\pi(n),r)$ admits a nontrivial solution.
\end{theorem}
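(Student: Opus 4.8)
The plan is to reduce the statement to the single hardest embedding dimension $r=1$ and then exhibit an explicit configuration of points on a line, reading off the ordinal group it induces rather than trying to prescribe the group in advance. The driving observation is the monotonicity $E(r_1)\subseteq E(r_2)$ whenever $r_1\le r_2$, noted right after (\ref{cmds2}): any EDM realizable in $\Re^1$ is simultaneously realizable in every $\Re^r$ with $r\ge 1$. Hence $F(\pi(n),1)\subseteq F(\pi(n),r)$ for all $r$, and it suffices to produce one group of ordinal constraints $\pi(n)$ together with a nontrivial $D\in F(\pi(n),1)$; that same $D$ will then witness a nontrivial solution of $F(\pi(n),r)$ for every $r$ at once.

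To build such a $\pi(n)$, I would place $n$ points $x_1,\dots,x_n$ on the real line, for instance $x_i=i$, or, to avoid ties, points chosen so that all $\binom{n}{2}$ pairwise distances are distinct (picking the $x_i$ one at a time and, at each step, avoiding the finitely many positions that would reproduce an existing distance). Let $D$ be the EDM of these points. Since the points lie in $\Re^1$, the centered matrix $-\frac12 JDJ$ has rank at most one, so $D\in E(1)$. Sorting the pairwise distances in non-increasing order (breaking ties arbitrarily if $x_i=i$ is used) fixes an ordering $\|x_{i_1}-x_{j_1}\|\ge\cdots\ge\|x_{i_m}-x_{j_m}\|$, which by Proposition \ref{prop-1} is exactly the statement that $D$ satisfies the group $\pi(n)=\{(i_1,j_1),\dots,(i_m,j_m)\}$ read off from this sorting. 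Thus $D\in\Omega_{\pi(n)}$, and therefore $D\in F(\pi(n),1)$. Note I make no claim that an arbitrary ordering is realizable on a line; I only harvest the one ordering that a concrete line configuration produces, which is all the theorem's ``at least one equivalent class'' requires.

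It remains to check nontriviality and to pass to the whole equivalence class. For $n\ge 3$ the line configuration has at least two distinct pairwise distances (e.g.\ $|x_1-x_2|\ne|x_1-x_3|$), so by the Remark following Proposition \ref{prop-1} the corresponding $D$ is a nontrivial solution. Combined with the first paragraph, $D$ is a nontrivial solution of $F(\pi(n),r)$ for every $r$. Finally, Lemma \ref{lem-1} asserts that equivalence of ordinal groups preserves the existence of a nontrivial solution at each fixed $r$; applying it across all $r$ shows that every $\pi'(n)$ in the equivalence class $\mathcal O(n)$ of $\pi(n)$ also admits a nontrivial solution for every $r$, so this $\mathcal O(n)$ is the desired class.

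The step I expect to carry the real content is the simultaneity over all $r$: naively one might fear that different embedding dimensions force different orderings, but the monotonicity $E(1)\subseteq E(r)$ collapses this worry by letting a single one-dimensional witness serve every $r$ at once. The only other points needing care are the existence of a line configuration with at least two distinct distances and the bookkeeping that its sorted distance profile defines a legitimate element of $\Pi(n)$; both are routine once the reduction to $r=1$ is in place.
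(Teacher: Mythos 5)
Your proposal is correct and follows essentially the same route as the paper's proof: construct an explicit point configuration with at least two distinct pairwise distances, read off the ordinal group $\pi(n)$ from the sorted distances of its EDM, and invoke Lemma \ref{lem-1} to extend the conclusion to the whole equivalence class. Your one refinement --- placing the witness on a line so that $E(1)\subseteq E(r)$ makes a single configuration serve every $r$ simultaneously --- actually handles the ``for any $r$'' quantifier more explicitly than the paper, which constructs its points in $\Re^r$ for a fixed $r$ and leaves the uniformity over $r$ implicit.
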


\begin{proof}

{
We can pick up $x_1,\dots, x_n\in\Re^r$ satisfying
\begin{displaymath}
	\|x_3-x_1\|\neq\|x_2-x_1\|.
\end{displaymath}
The resulting EDM $\overline D\in E(r)$ satisfies $\overline D_{12}\neq \overline D_{13}$. Now we rank the off-diagonal elements $\overline D_{ij}$ ($i<j$) in a nonincreasing way. Assume we obtain the following sequence
\begin{displaymath}
\overline D_{i_1j_1}\ge\overline D_{i_2j_2}\ge \dots\ge \overline D_{i_mj_m}.
\end{displaymath}
Then $\pi(n)=\{(i_1,j_1),\dots,(i_m,j_m)\}$ is a group of ordinal constraints that $\overline D$ satisfies.

With Lemma \ref{lem-1}, for any $\pi'(n)\sim \pi(n)$, $F(\pi'(n),r)$ admits a nontrivial feasible solution. The proof is finished. }
\end{proof}

\begin{theorem}\label{thm-3}
When $r\ge n-2$, $F(\pi(n),r)$ admits a nontrivial feasible solution for any $\pi(n)\in\Pi(n)$.
\end{theorem}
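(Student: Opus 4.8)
The plan is to reduce to the boundary case $r=n-2$ (legitimate since $E(n-2)\subseteq E(r)$ for $r\ge n-2$) and, via Proposition \ref{prop-1}, to reformulate the claim as a purely geometric statement: for every $\pi(n)\in\Pi(n)$ there exist points $x_1,\dots,x_n\in\Re^{n-2}$ whose pairwise distances obey the prescribed chain \reff{x-order}, with at least two of the distances distinct. By Lemma \ref{lem-1} it suffices to produce such a configuration for a single representative of each equivalence class, so I may relabel the points to make the chosen ordering convenient.

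The engine of the construction is the observation that $n-1$ points already attain their full affine dimension inside $\Re^{n-2}$. For $n-1$ points in $\Re^{n-2}$ the squared-distance map (modulo isometry) is a map between spaces of equal dimension $\binom{n-1}{2}$, hence a local diffeomorphism onto an open set at a generic configuration such as the regular simplex; since the regular simplex sits on the boundary of every ordering cone, arbitrarily small perturbations realize \emph{every} strict ordering of the $\binom{n-1}{2}$ mutual distances (this is the genericity idea behind Theorem \ref{thm-2}, now used in full dimension). First I would fix such a nondegenerate simplex $x_1,\dots,x_{n-1}$ realizing, with strict inequalities and prescribed gaps, the sub-order that $\pi(n)$ induces on the pairs not involving index $n$.

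Next I would adjoin $x_n$. Writing $t_i=\|x_n-x_i\|^2$, each difference $t_i-t_j$ is an affine function of $x_n$, and since the vectors $x_i-x_{n-1}$ span $\Re^{n-2}$ these differences can be driven to any prescribed values by trilateration. Thus the relative order and the gaps among the $n-1$ new distances can be matched to the positions that $\pi(n)$ assigns them; what stays undetermined is a single scalar, the common level $t_1$, which the geometry then forces. This reflects the one genuine rigidity of the problem: in dimension exactly $n-2$ the squared-distance vectors satisfy the vanishing Cayley--Menger determinant, so they trace a hypersurface in $\Re^{m}$ rather than an open set, and one cannot prescribe all $m$ distances independently.

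Accordingly, the main obstacle is to show that this codimension-one hypersurface nonetheless meets every ordering cone, i.e. that the forced level $t_1$ can be made to land in its prescribed slot of the chain. I would resolve this by a continuity / intermediate-value argument: introduce a one-parameter deformation of the fixed simplex $x_1,\dots,x_{n-1}$ (for instance an overall rescaling), track the induced level continuously, and select the parameter at which the block of new distances slides into place, exploiting the \emph{non-strict} nature of the ordinal constraints to absorb the residual slack. Finally, nontriviality is automatic and need not be engineered: $n$ points with all pairwise distances equal form a regular simplex, which requires embedding dimension $n-1>n-2$; hence any non-collapsed configuration in $\Re^{n-2}$ already has two distinct pairwise distances, so the solution produced is nontrivial and not merely nonzero.
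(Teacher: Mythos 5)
Your reduction to $r=n-2$, the appeal to Lemma \ref{lem-1} and Proposition \ref{prop-1}, and your closing observation that nontriviality is automatic (a nonzero EDM on $n$ points with embedding dimension at most $n-2$ cannot have all off-diagonal entries equal, since that would force a regular simplex on $n$ vertices, which needs dimension $n-1$) are all correct, and that last point is a genuinely nice simplification. The gap is exactly where you place it: the intermediate-value step. Once $x_1,\dots,x_{n-1}$ are fixed and the differences $t_i-t_j$ are prescribed, $x_n$ is uniquely determined and the level is forced; you must then land $n-1$ values simultaneously in $n-1$ prescribed slots of the chain using a single parameter, while the slots themselves move under the deformation. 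The specific deformation you suggest, an overall rescaling of the base simplex, is essentially inert: if the prescribed differences are rescaled with it, every $t_i$ and every base distance scales by the same factor $\lambda^2$ and nothing moves relative to the chain; if they are not rescaled, the entire interleaving analysis has to be redone, and you do not do it. A second, compounding restriction goes unflagged: the local-diffeomorphism argument only realizes sub-orders of the $\binom{n-1}{2}$ base distances by \emph{small} perturbations of the regular simplex, so all base distances lie in an $\epsilon$-window, and every new distance that $\pi(n)$ sandwiches between two base distances must be driven into that same narrow window --- a far tighter target than ``some slot.'' As written this is a plan whose hardest step is discharged by a one-line appeal to continuity.

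The paper avoids the difficulty entirely by exploiting the non-strictness of the ordinal constraints much more aggressively than you do: it never tries to realize the strict order. It builds a bipyramid --- a regular simplex with edge $t$ on $n-2$ vertices in $\Re^{n-3}$, plus two apexes at heights $\pm h$ above its centroid with $h=\sqrt{t^2-b_{n-3}^2}$ --- so that every pairwise distance equals $t$ except the apex-to-apex distance $2h>\sqrt{2}\,t>t$ (Lemmas \ref{lem-7-1} and \ref{lemma2} supply $b_{n-3}<\tfrac{\sqrt{2}}{2}t$). Relabelling so that $(i_1,j_1)$ is the apex pair makes every inequality in \reff{group} after the first hold with equality and the first hold strictly, which is all a nontrivial solution requires. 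What you are attempting is in effect the stronger claim that $F(\pi(n),n-2)$ contains a point with all distances distinct; that may well be true, but it does not follow from your sketch and is more than Theorem \ref{thm-3} asks for.
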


\begin{proof} See Appendix.
\end{proof}

{In fact, what we are more interested} in is the problem that $r< n-2$ (in especial $r\ll n$ such as $r=2,\ 3$). We would like to point out that for $r<n-2$, the result in Theorem \ref{thm-3} may fail. The counterexample is given as follows. When $n = 4,\ r = 1$, we found a group of ordinal constraints
	\[
	\pi(4)=\{(2,3),(1,2),(1,3), (1,4), (3,4), (2,4)\}
	\] such that $F(\pi(4), 1)$ admits only zero feasible solution.
	
	However, when $n=5$ and $r=2$, we can still construct {a nontrivial solution for some special cases of ordinal constraints}.
{\begin{theorem}\label{thm-4}	
	Given any $\pi(n)\in\Pi(n)$, assume that $\pi(n)$ take the form of (\ref{pi}). If either of the following condition holds, 
\bit
\item [(i)] $\{i_1,j_1\}\bigcap\{i_m,j_m\}=\emptyset$;
\item [(ii)] $\{i_1,j_1\}\bigcap\{i_m,j_m\}\neq\emptyset$ and $\{i_{m-1},j_{m-1}\}\bigcap\{i_m,j_m\}=\emptyset$;
\eit  
 $F(\pi(n),r)$  admits a nontrivial feasible solution.
\end{theorem}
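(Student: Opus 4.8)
The plan is to reduce, via Proposition \ref{prop-1}, to the purely geometric task of constructing points $x_1,\dots,x_n\in\Re^r$ whose pairwise distances realize the chain \reff{x-order}, with at least two distinct distance values (this is what nontriviality means by the Remark following Proposition \ref{prop-1}). First I would invoke Lemma \ref{lem-1} to relabel the points so that the smallest pair becomes $(i_m,j_m)=(n-1,n)$. Under condition (i) the largest pair then lies inside $\{1,\dots,n-2\}$, while under condition (ii) it meets $\{n-1,n\}$ in exactly one index and, in addition, the second–smallest pair satisfies $\{i_{m-1},j_{m-1}\}\cap\{n-1,n\}=\emptyset$.

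The core construction is to force the smallest distance to vanish by setting $x_{n-1}=x_n=:y$. Then $D_{n-1,n}=0$ is automatically the global minimum, so the final inequality of the chain holds for free, and the coincidence identifies each pair $(n-1,k)$ with $(n,k)$ into a single distance $\|y-x_k\|^2$ on the $n-1$ effective points $\{y\}\cup\{x_k:k\le n-2\}$. The remaining task is to place these $n-1$ points in $\Re^r$ so that the first $m-1$ inequalities of \reff{x-order} hold with at least one distance strictly positive.

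The role of conditions (i) and (ii) is precisely to keep the extreme pairs genuine after the coincidence. Under (i) the largest pair avoids $\{n-1,n\}$, so $D_{i_1j_1}$ is a distance between two surviving distinct points and can be made strictly positive, which simultaneously supplies the second distance value (hence nontriviality) and can be taken as the global maximum. Under (ii) the largest pair still contributes a positive distance through its index outside $\{n-1,n\}$, and the hypothesis $\{i_{m-1},j_{m-1}\}\cap\{n-1,n\}=\emptyset$ guarantees that $D_{i_{m-1}j_{m-1}}$ is realized among the surviving points, so it furnishes a strictly positive ``floor'' sitting just above the vanished minimum. I would then realize the reduced order on the $n-1$ points by induction on $n$ (the cases $n\le 4$, together with Example 2.1 and Theorem \ref{thm-3} in the regime $r\ge n-2$, serving as base cases), or equivalently by the generic-perturbation argument of Theorem \ref{thm-2} once the reduced order is shown to fall in a realizable equivalence class.

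The main obstacle is the bookkeeping created by the coincidence $x_{n-1}=x_n$: identifying $(n-1,k)$ with $(n,k)$ forces these two distances to be equal, and any pair listed between them in $\pi(n)$ is thereby squeezed into equality as well, so the reduced problem is an ordinal-constraint instance on $n-1$ points that carries a prescribed set of \emph{ties}. The crux is to verify that these forced ties do not over-constrain the configuration in $\Re^r$, i.e.\ that the strict order obtained after collapsing the ties still admits a realization in dimension $r$. This is exactly where conditions (i) and (ii) must be used: they localize the tie pattern at the bottom of the chain, so that the surviving strict order is inherited by the smaller instance and the induction can close. Checking this inheritance—that (i)/(ii) for $\pi(n)$ yields a realizable tied order on $n-1$ points—is the step I expect to require the most care.
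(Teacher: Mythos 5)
Your opening moves coincide with the paper's: pass to a point configuration via Proposition \ref{prop-1}, relabel via Lemma \ref{lem-1}, and collapse the two indices of the smallest pair into a single point so that $D_{i_mj_m}=0$ and the final inequality of the chain is free. From there, however, your plan has a genuine gap. You propose to realize the remaining $m-1$ inequalities by induction on $n$ (or by the genericity argument of Theorem \ref{thm-2}) applied to the reduced instance on the $n-1$ surviving points. That reduced instance is essentially the original problem again: a full ordinal chain on the pairwise distances of $n-1$ points in $\Re^r$, now additionally burdened with forced ties. Nothing guarantees that this reduced chain satisfies hypotheses (i) or (ii), so the inductive hypothesis cannot be invoked; and an unconditional induction is impossible, since the paper's own example with $n=4$, $r=1$ shows that a full ordinal chain on few points in low dimension can force all points to coincide. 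The argument of Theorem \ref{thm-2} does not help either, because it only produces \emph{some} realizable equivalence class of orderings, not the prescribed one. You flag this step as the one ``requiring the most care,'' but it is not a bookkeeping issue about ties: it is the entire difficulty, and your outline contains no idea that closes it.

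The idea you are missing is that the paper never attempts to realize the middle $m-2$ inequalities as a strict order: it makes \emph{all} the intermediate distances equal, so that those inequalities hold automatically as equalities no matter how the middle pairs are permuted. In case (i), with the extremes relabelled to $(1,2)$ and $(4,5)$, the construction is two congruent regular triangles on vertices $1,3,4$ and $2,3,4$ glued along the edge joining $3$ and $4$, with $x_5=x_4$: then $D_{45}=0$, every other pair except $(1,2)$ takes the common value $t^2$, and $D_{12}=3t^2$ is strictly larger, which also delivers nontriviality since $D_{12}\neq D_{13}$. Case (ii) is handled the same way by collapsing two pairs (point $1$ with $5$, and $3$ with $4$). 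Conditions (i) and (ii) are exactly what makes these collapses compatible with the positions of the extreme pairs in the chain. (The paper's own proof is written for five points, and extending the ``all middle distances equal'' configuration to general $n$ with $r$ fixed is itself a nontrivial matter; but that is a separate issue from the gap in your argument.)
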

\begin{proof}
(i) 
Without loss of generality, we assume $\{i_1,j_1\}=\{1,2\}$, $\{i_m,j_m\}=\{4,5\}$, that is, $D_{12}$ is  required to be the largest component in $D$ and $D_{45}$ the smallest. By setting point $4$ and point $5$ to coincide with each other,  we can find a nontrivial solution for any full ordinal constraints, as shown in the left part of Fig. \ref{fig:thm4}. Here  both  the triangle with vertices $1,3,4$ and triangle with vertices $2,3,4$ are regular triangles.  Such set of points leads to a nontrivial solution of $F(\pi(n),r)$ for any ordinal constraints $\pi(n)$ given by
  \[
  \{(1,2), (i_2,j_2), \dots, (i_{m-1}, j_{m-1}), (4,5))\}.
  \]
  (ii) Without loss of generality, we can assume that $\pi(n)$ take the form of 
  \be\label{eq-pi-n}
  \{(1,2), (i_2,j_2), \dots, (3, 4), (1,5))\}.
  \ee
 By setting points $1$ and $5$ to coincide with each other,  and $3$, $4$ to coincide with each other, the resulting points shown in the right part of  Fig. \ref{fig:thm4} leads to a nontrivial solution of $F(\pi(n),r)$ for any ordinal constraints $\pi(n)$ defined by (\ref{eq-pi-n}). The proof is finished. 
\end{proof}
}
\begin{figure}[htbp]
	\centering
	\includegraphics[height=2.3cm,width=4cm]{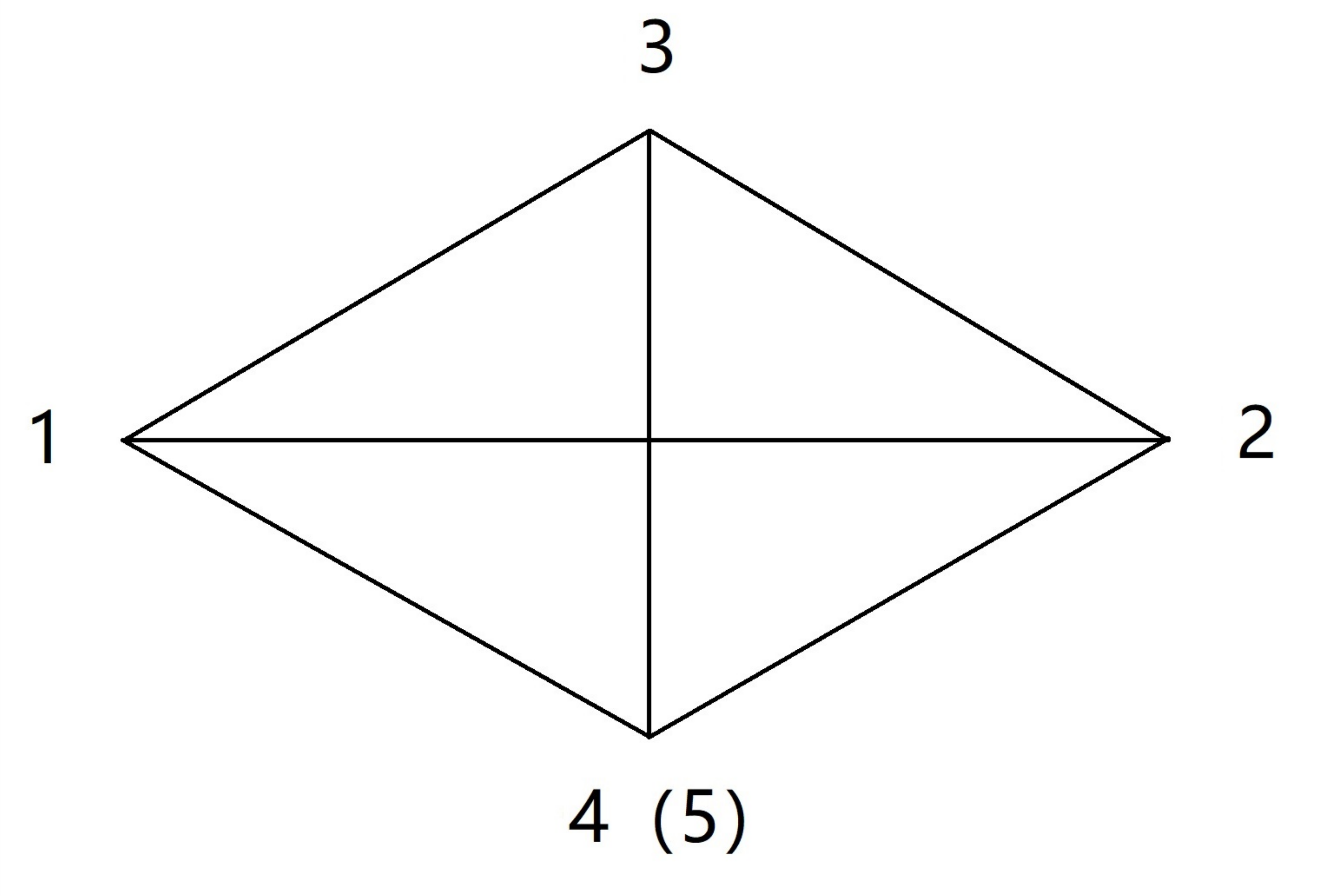}
	\includegraphics[height=3cm,width=3cm]{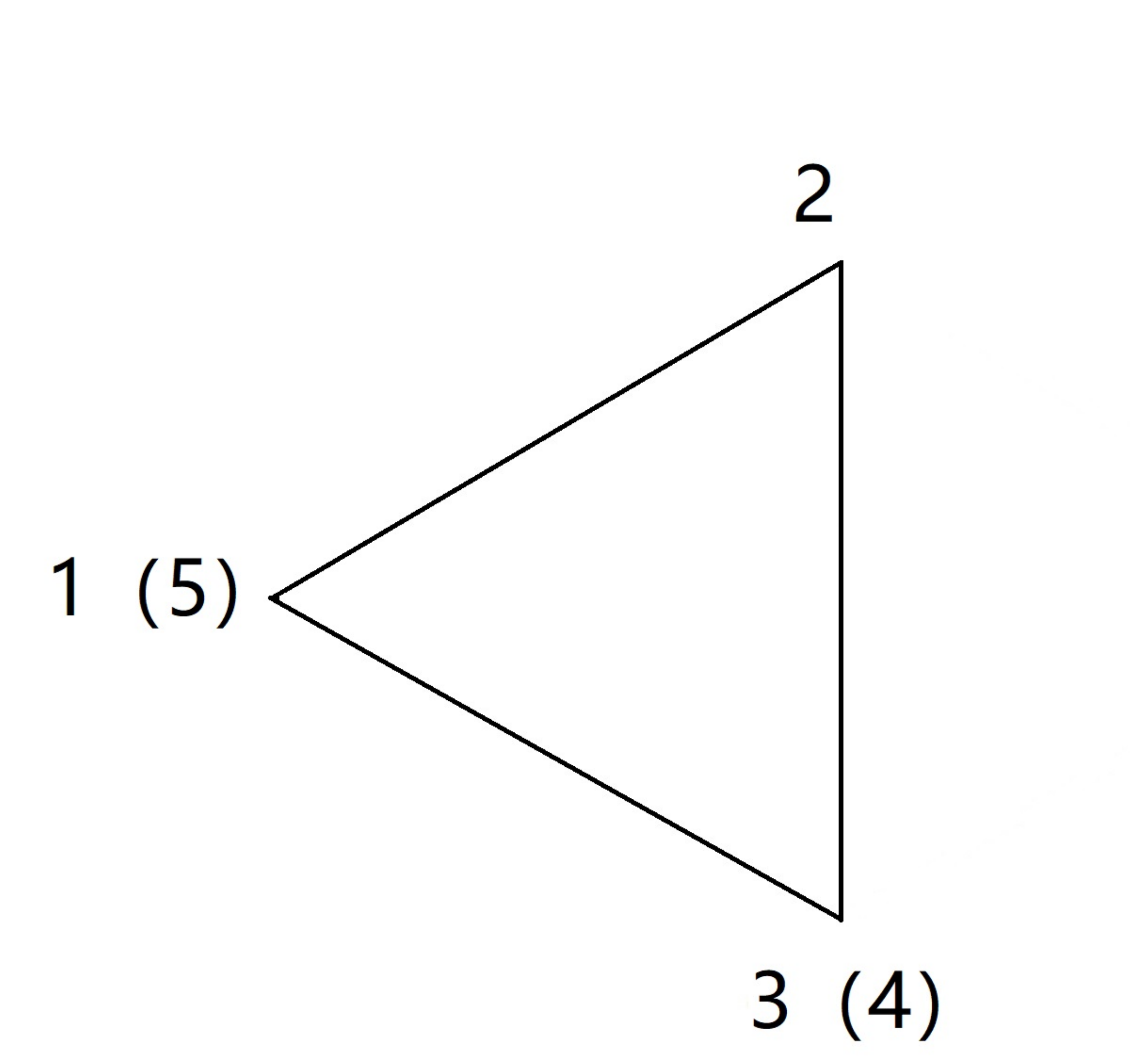}
	\caption{{Left: (i) in Theorem \ref{thm-4}; Right: (ii) in Theorem \ref{thm-4}. 
}}	
	\label{fig:thm4}
\end{figure}
{\begin{remark}
An open question is that for the case where  $\{i_1,j_1\}\bigcap\{i_m,j_m\}\neq\emptyset$ and $\{i_{m-1},j_{m-1}\}\bigcap\{i_m,j_m\}\neq\emptyset$, whether $F(\pi(n),r)$ still admits a nontrivial solution. 
\end{remark}
	
 
 On the other hand, Theorem \ref{thm-3} implies that $F(\pi(n),r)$ admits a nontrivial feasible solution for all $\pi(n)\in\Pi(n)$ when $r\ge n-2$, which inspires us to consider to divide $n$ points into subgroups. Let $\{N_k\}_{k\in I}$  be a partition of $\{1,\dots, n\}$, i.e., $N_i\cap N_j= \emptyset$ for any $i,j\in I$ and $\bigcup_{k\in I}\{N_k\}=\{1,\dots, n\}$. 

\begin{theorem}\label{thm-5}
	Given the embedding dimension $r$ and $n$ points with $r<n-2$. For $\tilde \pi(n)=\cup_{k\in I}\pi(N_k)$ with $|N_i|\le r+2$ and no overlaps between $N_k$, $F(\pi(N),r)$ admits a nontrivial feasible solution.
\end{theorem}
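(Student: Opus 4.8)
The plan is to exploit the block structure of $\tilde\pi(n)$. Since the ordinal constraints decompose as $\tilde\pi(n)=\cup_{k\in I}\pi(N_k)$ and the blocks $N_k$ are pairwise disjoint, no constraint in $\tilde\pi(n)$ ever compares a distance lying inside one block with a distance lying inside another; the cross-block distances are therefore completely unconstrained. This suggests realizing each block independently inside a common $\Re^r$ and then gluing the realizations together. The first step is to pass through Proposition \ref{prop-1}: a matrix $D\in F(\tilde\pi(n),r)$ is precisely the EDM of some points $x_1,\dots,x_n\in\Re^r$ whose pairwise distances, restricted to each block $N_k$, obey the ordering prescribed by $\pi(N_k)$. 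This reduces the whole problem to producing a single point configuration in $\Re^r$.

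Next I would realize each block separately. Because $|N_k|\le r+2$, the embedding dimension satisfies $r\ge |N_k|-2$, so Theorem \ref{thm-3} applies to the $|N_k|$ points of block $N_k$ with dimension $r$. This yields points $\{y_i:i\in N_k\}\subset\Re^r$ whose within-block EDM satisfies $\pi(N_k)$; moreover, whenever $|N_k|\ge 3$, Theorem \ref{thm-3} delivers a nontrivial configuration, i.e. two distinct distances inside that block.

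I would then assemble the global configuration by fixing a translation vector $t_k\in\Re^r$ for each block and setting $x_i=y_i+t_k$ for $i\in N_k$. Translation preserves every within-block distance, so each ordering $\pi(N_k)$ continues to hold. Since all $x_i$ lie in $\Re^r$, the resulting EDM $D$ automatically has $\diag(D)=0$ and $\rank(JDJ)\le r$, hence $-D\in\mathcal{K}_+^n(r)$ and $D\in F(\tilde\pi(n),r)$. The translations $t_k$ are otherwise free, which is exactly the flexibility granted by the absence of cross-block constraints.

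The only point requiring care is nontriviality. If some block has $|N_k|\ge 3$, it is inherited directly from Theorem \ref{thm-3} as noted above. The residual case is when every block has at most two points, so that the within-block orderings are vacuous; here I would use that $r<n-2$ forces $n>r+2\ge|N_k|$, whence there are at least two blocks, and I would choose the translations $t_k$ in general position so that at least two off-diagonal entries of $D$ become distinct. I expect this bookkeeping of the degenerate small-block case, together with verifying that the independent translations never create an unintended equality among the constrained distances, to be the main (though routine) obstacle; the conceptual core is simply that the decomposable structure of $\tilde\pi(n)$ allows Theorem \ref{thm-3} to be applied block by block.
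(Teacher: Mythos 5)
Your proposal is correct and follows essentially the same route as the paper: apply Theorem \ref{thm-3} to each block $N_k$ (valid since $|N_k|\le r+2$ gives $r\ge |N_k|-2$) and then glue the block realizations into a single configuration in $\Re^r$. In fact your write-up is more careful than the paper's one-line gluing step, since you explicitly handle the translation argument and the degenerate case where every block has at most two points and nontriviality must come from the cross-block distances.
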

\begin{proof}
For any $\pi(N_k)$, due to $|N_k|\le r+2$, Theorem \ref{thm-3} implies that $F(\pi(N_k),r)$ admits a nontrivial feasible solution, $k\in I$. Since {$\{N_k\}_{k\in I}$ is a partition of $\{1,\ ,\dots,\ n\}$}, one can find points $x_1,\dots, x_n\in \Re^r$ such that the resulting nonzero EDM  satisfies the ordinal constraints in $F(\pi(N),r)$.  In other words, $F(\pi(N),r)$ admits a nontrivial feasible solution.
\end{proof}}

We end this part by the following {remark}.

{\bf Remark.}
{Theorem \ref{thm-3}, Theorem \ref{thm-4} and Theorem \ref{thm-5} partly {answer} question Q2.} As described above, for some $\pi(n)$ and some $r<n-2$, it is possible that EDMOC admits only zero solution. In the case of zero solution, it interprets the numerical observation 'crowding phenomenon', which means that all points collapse to one.

\section{A Majorized Penalty Approach}\label{sec-3}

In this part, we will discuss the majorized penalty approach for solving the EDMOC (\ref{prob-1}) with {squared weighted Frobenius norm}. That is,
\be\label{prob-2}
\begin{array}{ll}
\min_{D\in\mathcal S^n} & { \frac12\|W\circ(D-\Delta)\|^2}\\
\hbox{s.t.} & \diag(D)=0, \ -D\in \mathcal{K}^n_+(r),\\
&D_{i_1j_1} \ge D_{i_2j_2}\ge\dots\ge D_{i_mj_m},
\end{array}
\ee
where $\Delta\in\mathcal S^n$ is given, {and $W$} is the weight matrix with nonnegative elements. As we mentioned before, the challenges of solving {(\ref{prob-2})} lie in two aspects: (i) the nonconvex rank constraint and (ii) the potentially huge number of ordinal constraints. We will discuss the two issues in Section \ref{sec3-1} and Section \ref{sec3-2} {separately}. Details of the majrozation penalty approach {are} summarized in Section \ref{sec3-3}.

\subsection{Tackling Rank Constraint}\label{sec3-1}
To deal with the rank constraint, we make use of the majorized technique proposed in \cite{Zhou2017A,Qi2018}, which is detailed below.

Let $\Pi^B_{\mathcal{K}^n_+(r)}(D)$ denote the solution set of the following problem
\[
\min_{X\in \mathcal{K}^n_+(r)}\ \ {\frac12\|X-D\|^2}.
\]
Due to the nonconvexity of $\mathcal{K}_+^n(r)$, $\Pi_{\mathcal{K}^n_+(r)}^B(\cdot)$ may contain multiple solutions. Let {$\Pi_{\mathcal{K}_+^n(r)}(D)\in\Pi^B_{\mathcal{K}^n_+(r)}(D)$} be one of them.
It leads to the equivalent condition (\ref{g-rank}), 
by which problem (\ref{prob-2}) can be reformulated as the following problem
\be\label{prob-edm-6}
\begin{array}{ll}
\min_{D\in\mathcal S^n} & {\frac12\|W\circ(D-\Delta)\|^2}\\
\hbox{s.t.} & \diag(D)=0,\\
&D_{i_1j_1} \ge D_{i_2j_2}\ge\dots\ge D_{i_mj_m},\\
& g(D)=0.
\end{array}
\ee
The idea of majorized penalty approach is to penalize $g(D)$ into the objective function, and design a majorization approach to sequentially solve the penalty problem. This gives the majorized penalty approach.

As for the penalty problem, it takes the following form
\be\label{prob-edm-62}
\begin{array}{ll}
\min_{D\in\mathcal S^n} & {\frac12\|W\circ(D-\Delta)\|^2}+\rho g(D)\\
\hbox{s.t.} & \diag(D)=0,\\
&D_{i_1j_1} \ge D_{i_2j_2}\ge\dots\ge D_{i_mj_m},\\

\end{array}
\ee
where $\rho>0$ is the penalty parameter.

As in \cite{Zhou2017A,gao2009calibrating}, we design a majorization function of $g(D)$ in the following way. Recall that a majorization function of $g(D)$ at $D^k\in\mathcal S^n$, denoted as $g_m(D,D^k)$, has to satisfy following conditions
\be\label{major-f}
g_m(D^k,D^k) = g(D^k), \ \ g_m(D,D^k)\ge g(D), \ \forall \ D\in\mathcal S^n.
\ee
Based on the properties of $\Pi_{\mathcal{K}^n_+(r)}(\cdot)$ \cite{Zhou2017A}, there is
\[
g(D)=\frac12\|D\|^2-\frac12\|\Pi_{\mathcal{K}_+^n(r)}(-D)\|^2:=\frac12\|D\|^2-h(-D),
\]
and
\[
\Pi_{\mathcal{K}_+^n(r)}(D)\in\partial h(D),
\]
where $\partial h(D)$ is the set of subdifferentials of $h$ at $D$. Note that $h(D)$ is a convex function \cite{Mordukhovich2006Variational}, for any $V\in\partial h(D)$, there is
\be\label{sub-h}
h(\widehat D)-h(D)\ge\langle V,\widehat D-D\rangle, \ \forall \ \widehat D\in\mathcal S^n.
\ee With (\ref{sub-h}), we get the following majorization function of $g(D)$
\be\label{gm}
g_m(D,D^k) = \frac12\|D\|^2+\langle \Pi_{\mathcal{K}^n_+(r)}(-D^k),D-D^k \rangle.
\ee
It is easy to verify that $g_m(D,D^k)$ defined as in (\ref{gm}) satisfies properties of majorization function in (\ref{major-f}).
In other words, at each iteration $k$, we solve the following majorization subproblem
\be \label{EDM-Model-p2}
\begin{array}{ll}
	\min_{D\in\mathcal S^n} & \frac 12 \| W\circ(D-\Delta) \|^2 + \frac\rho2\|D\|^2 +\rho\langle\Pi_{\mathcal{K}_+^n(r)}(-D^k),D-D^k\rangle\\ [0.6ex]
	\mbox{s.t.} & \diag(D) = 0, \\
&D_{i_1j_1} \ge D_{i_2j_2}\ge\dots\ge D_{i_mj_m}.\\
\end{array}
\ee

After rearranging the terms in the objective function, we get the following subproblem{
\be \label{EDM-Model-p3}
\begin{array}{ll}
	\min_{D\in\mathcal S^n} & \frac {1}2 \|\widetilde{W}\circ( D - \widehat D^k) \|^2 \\
	\mbox{s.t.} & \diag(D) = 0, \\
&D_{i_1j_1} \ge D_{i_2j_2}\ge\dots\ge D_{i_mj_m},\\
\end{array}
\ee
where $\widetilde{W}_{ij}=(W_{ij}^2+\rho)^{\frac{1}{2}}$ and
$
\widehat D^k_{ij} = \frac{W_{ij}^2\Delta_{ij}-\rho(\Pi_{\mathcal{K}^n_+(r)}(-D^k))_{(i,j)}}{W_{ij}^2+\rho}.
$
}

We end this part by two remarks.

{{\bf Remark.}}
The remaining issue is how to calculate $\Pi_{\mathcal{K}_+^n(r)}(D)$. As shown in \cite[Eq.(22), Prop. 3.3]{Zhou2017A}, one particular $\Pi_{\mathcal{K}_+^n(r)}(D)$ can be computed through
\be\label{eq-projection}
\Pi_{\mathcal{K}_+^n(r)}(D) = PCA_r^+(JDJ)+(D-JDJ),
\ee
where
\be\label{part-eig}
PCA_r^+(A): = \sum_{i= 1}^r\max(0,\lambda_i)p_ip_i^T,
\ee
with the spectral decomposition of $A$ given by
\[
A = \lambda_1p_1p_1^T+\dots+\lambda_np_np_n^T, \
\]
$\lambda_1\ge\dots\ge\lambda_n$ are the eigenvalues of $A $ and $p_i$, $i = 1,\dots n$, are corresponding orthonormal eigenvectors.

{\bf Remark.}
The key point in the majorization function $g_m$ is the calculation of $\Pi_{\mathcal{K}^n_+(r)}(\cdot)$, where one can note from (\ref{part-eig}) that only the first $r$ leading eigenvalues are needed. It will significantly reduce the computational complexity when $n$ increases. That is the main difference {between our majorization function and the one} in \cite{Qi2014Computing,LiQi2017}, where the full spectral decomposition is used.

\subsection{Tackling Ordinal Constraints in Subproblems}\label{sec3-2}
To solve the subproblem {(\ref{EDM-Model-p3})}, note that the solution $D^{k+1}$ has the {zero diagonal elements}. The rest off-diagonal elements are given by solving the following subproblem

\be\label{D-sub-offd}
\begin{array}{ll}
	\min_{D_{ij}, i< j} & \sum_{i<j} \tilde{w}_{ij}(D_{ij}-\widehat D^k_{ij})^2\\
	\hbox{s.t.} &D_{i_1j_1} \ge D_{i_2j_2}\ge\dots\ge D_{i_mj_m}\ge0.\\
\end{array}
\ee
Here we {add} $D_{i_mj_m}\ge0$ to make the elements of $D$ nonnegative, which is also a necessary condition for EDM.

 Due to the symmetry of $D$, let \[
 x = (D_{i_1j_1},\dots, D_{i_mj_m})^T,\ y = (\widehat D^k_{i_1j_1},\dots, \widehat D^k_{i_mj_m})^T,
 \]
 we get the following subproblem
\be\label{isotonic}
\begin{array}{ll}
	\min_{x\in\Re^m}& \frac12\|\widetilde{H}(x-y)\|_2^2\\
	\hbox{s.t.}& x_1\ge x_2\ge \dots\ge x_m\ge0.
\end{array}
\ee
where
\[
\widetilde{H}:=\Diag(h)\ and\ h=(h_1,\dots, h_m)^T:=(\tilde w_{i_1j_1}, \dots, \tilde w_{i_mj_m})^T.
\]

This is the weighted isotonic regression problem which has been studied in \cite[P13]{barlow1972statistical}.

{To solve it, we first consider the special case with $\widetilde H = I$, which is the well-known isotonic regression}
 \be\label{isotonicR}
 \begin{array}{ll}
 	\min_{x\in\Re^m}& \frac12\|x-y\|_2^2\\
 	\hbox{s.t.}& x_1\ge x_2\ge \dots\ge x_m\ge0.
 \end{array}
 \ee
Problem (\ref{isotonicR})
can be solved by PAVA.
Here we modify the recent fast solver FastProxSL1 developed in \cite{Bogdan2015SLOPE} to solve (\ref{isotonicR}).	
FastProxSL1 \cite{Bogdan2015SLOPE} is used to solve the following problem
\be\label{sub2}
\begin{array}{ll}
	\min_{x\in\Re^m}& \frac12\|x-\hat y\|_2^2+\sum_{n=1}^n\lambda_ix_i\\
	\hbox{s.t.}& x_1\ge x_2\ge \dots\ge x_m\ge0,
\end{array}
\ee
where nonnegative and nonincreasing sequences $\hat y\in\Re^m and \ \lambda\in\Re^m $ are given. Problem (\ref{sub2}) can be reformulated as typical isotonic regression (\ref{isotonicR}) with $y=\hat y-\lambda$.	
 Consequently, we reach the following algorithm (denoted as \texttt{mFastProxSL1}) for solving {isotonic regression (\ref{isotonicR}).}

\begin{algorithm} \texttt{mFastProxSL1}
	\label{alg-isotonic}
		\bit
		\item [S0] Given $y\in\Re^m$. Let $x:=y$.
		\item [S1] While $x$ is not decreasing, do
		
		Identify strictly increasing subsequences, i.e. segments i:j such that\\
		$$x_i < x_{i+1} < \dots < x_j$$\\
		Replace the values of $x$ over such segments by its average value: for k $\in \{i,i+i,\dots,j\}$\\
		$$x_k \leftarrow \frac{1}{j-i+1}\sum_{i \le l \le j}x_{l}.$$
		\item [S2] Return $x=\max(x,0)\in\Re^m$.
		\eit
\end{algorithm}

As for the weighted case (\ref{isotonic}), 	we modify Algorithm \ref{alg-isotonic} by letting  \begin{equation*}
x_k\leftarrow\overline{x}=\frac{\sum_{i\le p\le j}h_p^2x_p}{\sum_{i\le p\le j}h_p^2}
\end{equation*}
in S1. {The resulting algorithm for (\ref{isotonic}) is} denoted as \texttt{w-mFastProxSL1}.

\subsection{Majorized Penalty Approach}\label{sec3-3}
Now we give the details of majorized penalty approach as shown in (\ref{MPA}). Similar as in \cite[Theorem 3.2]{Zhou2017A}, the majorized penalty approach enjoys the following convergence result.
	\begin{algorithm} Majorized Penalty Approach for {(\ref{prob-2})}
	\label{MPA}
	\bit
	\item [S0] Initialization. $\rho>0$, $\epsilon>0$. $D^0=0$, $k:=0$.
	\item [S1] Calculate $\Pi_{\mathcal{K}^n_+(r)}(-D^k)$. If $\|g(D^k)\|\le \epsilon$, stop. Otherwise, calculate $\widehat D^k$.
	\item [S2] Solve the subproblem {(\ref{isotonic})} via \texttt{mFastProxSL1} to get $D^{k+1}$.
	\item [S3] If the stopping criteria is satisfied, stop. Otherwise, increase $\rho$ and let $k:=k+1$, go to S1.
	\eit
\end{algorithm}
\begin{theorem}
Suppose $D^*$ is an optimal solution of (\ref{prob-2}). Let $D^*_\sigma$ be an optimal solution of the penalized problem (\ref{prob-edm-62}). Let $\epsilon>0$ be given. For any $\sigma\ge\sigma_{\epsilon}$, $D^*_{\sigma}$ must be $\epsilon$-optimal. That is,
\begin{displaymath}
	D^*_{\sigma}\in F(\pi(n),r), \ g(D^*_{\sigma})\le\epsilon \ \hbox{and } f(D^*_\sigma)\le f(D^*).
\end{displaymath}
\end{theorem}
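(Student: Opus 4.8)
The plan is to run the standard quadratic-penalty comparison, leaning on two structural facts already recorded: by (\ref{g-rank}) the penalty $g$ is nonnegative and vanishes exactly on $\{D:-D\in\mathcal{K}^n_+(r)\}$, while the loss $f(D)=\frac12\|W\circ(D-\Delta)\|^2$ is itself nonnegative. Write $\Omega$ for the feasible set of the penalized problem (\ref{prob-edm-62}) --- the matrices with $\diag(D)=0$ obeying the ordinal chain $D_{i_1j_1}\ge\cdots\ge D_{i_mj_m}$ --- so that $F(\pi(n),r)=\Omega\cap\{D:g(D)=0\}$. The first step is the observation that the original optimizer $D^*$ lies in $\Omega$, and since it also satisfies $g(D^*)=0$ it is admissible for the penalized problem with penalized value $f(D^*)+\sigma g(D^*)=f(D^*)$.

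Next I would compare the two objective values at the two minimizers. Since $D^*_\sigma$ minimizes $f+\sigma g$ over $\Omega$ and $D^*\in\Omega$, optimality gives
\[
f(D^*_\sigma)+\sigma g(D^*_\sigma)\le f(D^*)+\sigma g(D^*)=f(D^*).
\]
From here the two quantitative claims drop out by discarding one nonnegative term at a time. Keeping $f(D^*_\sigma)\ge 0$ and discarding it leaves $\sigma g(D^*_\sigma)\le f(D^*)$, hence $g(D^*_\sigma)\le f(D^*)/\sigma$; choosing the threshold $\sigma_\epsilon:=f(D^*)/\epsilon$ then forces $g(D^*_\sigma)\le\epsilon$ for every $\sigma\ge\sigma_\epsilon$. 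Symmetrically, discarding instead the nonnegative term $\sigma g(D^*_\sigma)$ leaves $f(D^*_\sigma)\le f(D^*)$, which is the third assertion.

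For the membership claim I would note that $D^*_\sigma\in\Omega$ holds by construction, so it already satisfies the diagonal and ordinal constraints exactly; the only constraint in $F(\pi(n),r)$ it can violate is $-D^*_\sigma\in\mathcal{K}^n_+(r)$, and the magnitude of that violation is measured precisely by $g(D^*_\sigma)\le\epsilon$. This is exactly the step I expect to require the most care, because the literal reading ``$D^*_\sigma\in F(\pi(n),r)$'' would demand $g(D^*_\sigma)=0$, whereas the finite-$\sigma$ penalty bound only delivers $g(D^*_\sigma)\le\epsilon$. I would therefore make explicit that $\epsilon$-optimality here means approximate feasibility: $D^*_\sigma$ meets every constraint exactly except the conditional positive semidefinite/rank constraint, which it satisfies up to the tolerance $\epsilon$ in the $g$-measure, while simultaneously undercutting the optimal loss, $f(D^*_\sigma)\le f(D^*)$. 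An exact statement, i.e. $g(D^*_\sigma)=0$ at finite $\sigma$, would instead need an error-bound or exact-penalty argument for the nonconvex $g$, which the present comparison does not supply.
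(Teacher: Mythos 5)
Your argument is correct and is essentially the proof the paper relies on: the paper gives no proof of its own here, deferring to \cite[Theorem 3.2]{Zhou2017A}, whose proof is exactly this penalty comparison --- $D^*$ is feasible for (\ref{prob-edm-62}) with $g(D^*)=0$, so $f(D^*_\sigma)+\sigma g(D^*_\sigma)\le f(D^*)$, and dropping each nonnegative term in turn yields $g(D^*_\sigma)\le f(D^*)/\sigma\le\epsilon$ for $\sigma\ge\sigma_\epsilon:=f(D^*)/\epsilon$ and $f(D^*_\sigma)\le f(D^*)$. Your caveat about the membership claim is also well taken: as literally written, $D^*_\sigma\in F(\pi(n),r)$ would force $g(D^*_\sigma)=0$, which this finite-$\sigma$ argument cannot deliver, so the assertion should be read (as you do) as exact satisfaction of the diagonal and ordinal constraints together with the $\epsilon$-bound on the rank-cut violation measured by $g$.
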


{{\bf Remark.}}
Here we would like to highlight another advantage of the majorized {penalty} approach. As shown in Section \ref{sec2-3}, EDMOC (\ref{prob-2}) may {have} no feasible points for {some} $r$ and {some} ordinal constraints $\pi(n)$. In {that case}, solving the penalty problem (\ref{prob-edm-62}) seems to be a practical and good alternative. Our numerical test in Section \ref{sec5-2} will also verify this observation.

\section{Numerical Results}\label{sec-5}
In this section, we will conduct extensive numerical {tests} to demonstrate the efficiency of the proposed majorized penalty approach (denoted as \texttt{MPA}). We divide this section into four parts. In the first part, we discuss implementation issues of \texttt{MPA}. In the second part, we test the performance of \texttt{MPA}. In the third part, we compare \texttt{MPA} with some efficient solvers on SNL and MC. We also demonstrate numerical results for the weighted case in the last part.
\subsection{Implementations}\label{sec5-1}

The stopping criterion for \texttt{MPA} is the same as that in \cite{Zhou2017A}, that is,
\be\label{stop}
Fprog_k\le \epsilon_1, \ Kprog_k\le \epsilon_2,
\ee
where
\[
Fprog_k={\frac{f(D^{k-1})-f(D^k)}{\rho+f(D^{k-1})}, \ f(D) = \frac12\|D-\Delta\|^2},
\]
and
\[
Kprog_k = \frac{2g(D^k)}{\|JD^kJ\|^2}= 1-\frac{\sum_{i = 1}^r(\lambda_i^2-(\lambda_i-\max\{\lambda_i,0\})^2)}{\lambda_1^2+\dots+\lambda_n^2}
\]
We choose $\epsilon_1=\epsilon_2=10^{-3}$.
Other parameters are set as default. For solving subproblems by \texttt{mFastProxSL1} and \texttt{w-mFastProxSL1}, we modify the FastProxSL1.c\footnote{The code can be downloaded from {http://www-stat.stanford.edu/candes/SortedSL1}} file into the isotonic regression solver and the weighted isotonic regression solver, then use the mex file in Matlab.

After running \texttt{MPA}, we {adopt} cMDS to get the embedding points. These points will be {transformed} through Procrustes process to get the estimated points. Then we apply refinement step \cite{Fang2013Using} {to get the final estimation points and calculate RMSD and rRMSD to measure the error of nonrefined points and refined points separately.} The whole process is summarized as follows.

	\bit\item [S1]	Run \texttt{MPA} to get $D$.	
			\item [S2] Apply cMDS to get $x_1, \dots, x_n\in\Re^r$.
			\item [S3] {Apply Procrustes process to $x_1,\dots, x_n$ to get estimation points $\bar x_1, \dots, \bar x_n$. Calculate RMSD by
\[
			RMSD = \sqrt{\frac1n\sum_{i = 1}^n\|x_i^*-\bar x_i\|^2},
			\]
where $x^*_i, \dots, x^*_n$ are the true positions.}
			\item [S4] Apply Refinement Step to get final refined points $\hat x_1,\dots, \hat x_n$. Calculate rRMSD as above with $\bar x_i$ replaced by $\hat x_i$, $i = 1,\dots, n$.

\eit

All the tests are conducted by using Matlab R2016b on a computer with Intel (R) Core (TM) i5-6300HQ CPU @ 2.30GHz 2.30GHz, RAM 4GB.

\subsection{Performance Test}\label{sec5-2}

In this part, we test the performance of our algorithm. The test problem is generated in the following way. {A set of} $n$ points $x_1,\dots, x_n\in\Re^s$ are randomly generated to build an EDM $D$. {That is, $D_{ij}=\|x_i-x_j\|^2$, $i,j = 1,\dots, n$.} 
Here, we use $s$ to denote the dimension of points that generate $D$. We choose $s=11$ {and set $W_{ij}=1$ for all $i$ and $j$}. {The ordinal constraints are generated by $D$ in the following way.
\begin{algorithm} \texttt{Generating Ordinal Constraints}
	\label{alg-order}
\bit
\item [(a)] Input an EDM $D$. 
\item [(b)] Rank the elements $D_{ij}$, $i<j$, in a nonincreasing order as
\be
D_{j_1l_1}\ge D_{j_2l_2}\ge\dots\ge D_{j_ml_m}.
\ee
\item [(c)] Output ordinal constraints
\[
P_O=\{D\in\mathcal S^n \ | \ D_{j_1l_1}\ge D_{j_2l_2}\ge\dots\ge D_{j_ml_m}\}.\]
\eit
\end{algorithm}}
We set different prescribed embedding dimension $r$ in our test. The following information {is} reported in Table \ref{tab:Change_dim}: 
the size of $D$ $n$, the size of subproblem $m = \frac{n(n-1)}{2}$; the prescribed embedding dimension $r$; the cputime $t$ (in second, {including the refinement step}), the cputime for solving subproblem $t_{sub}$, cputime for partial spectral decomposition $t_{eig}$ used in (\ref{part-eig}); the number of iterations $Iter$, as well as RMSD, rRMSD, $Kprog_k$, $Fprog_{k}$, which have already been defined.

\begin{table}[tbhp]
	{\footnotesize
		\caption{Results for different embedding dimension $r$.} \label{tab:Change_dim}
		\begin{center}
			\resizebox{\textwidth}{50mm}{
				\begin{tabular}{|c|c|c|c|c|c|c|c|c|c|c|c|c|c|} \hline
					n & m & r & t(s) & $t_{sub}$(s) & $t_{eig}$(s) & RMSD & rRMSD & $Kprog_k$ & $Fprog_k$ & Iter \\ \hline	
					500 & 124750 &  2 & 3.03 & 1.01 & 0.87 & 50.02 & 4.57 & 9.22e-1 & 2.49e-3 & 76 \\\hline
					500 & 124750 &  3 & 3.79 & 1.35 & 1.12 & 40.27 & 3.40 & 9.01e-1 & 2.20e-3 & 93 \\\hline
					500 & 124750 &  4 & 4.62 & 1.73 & 1.30 & 41.02 & 3.14 & 8.96e-1 & 2.17e-3 & 137 \\\hline
					500 & 124750 &  5 & 5.24 & 2.01 & 1.50 & 42.19 & 3.14 & 8.31e-1 & 2.10e-3 & 154 \\\hline
					500 & 124750 &  6 & 5.89 & 2.32 & 1.38 & 42.51 & 3.72 & 7.32e-1 & 2.23e-3 & 181 \\\hline
					500 & 124750 &  7 & 6.84 & 2.43 & 1.68 & 43.75 & 3.21 & 6.23e-1 & 2.13e-3 & 201 \\\hline
					500 & 124750 &  8 & 7.98 & 3.21 & 1.85 & 43.80 & 3.22 & 5.17e-1 & 2.39e-3 & 228 \\\hline
					500 & 124750 &  9 & 8.38 & 3.14 & 2.07 & 41.21 & 3.82 & 3.18e-1 & 2.11e-4 & 258 \\\hline
					500 & 124750 & 10 & 12.07 & 4.90 & 2.98 & 44.48 & 4.12 & 1.87e-1 & 1.87e-5 & 347 \\\hline
					1000 & 499500 &  2 & 11.45 & 4.10 & 3.60 & 50.78 & 3.21 & 9.92e-1 & 1.32e-3 & 66 \\\hline
					1000 & 499500 &  3 & 13.22 & 5.16 & 4.16 & 46.35 & 2.32 & 9.76e-1 & 1.32e-3 & 73 \\\hline
					1000 & 499500 &  4 & 15.24 & 5.86 & 4.66 & 45.62 & 2.09 & 9.75e-1 & 1.53e-3 & 86 \\\hline
					1000 & 499500 &  5 & 16.94 & 6.86 & 4.94 & 44.98 & 2.01 & 9.55e-1 & 1.48e-3 & 95 \\\hline
					1000 & 499500 &  6 & 20.54 & 8.29 & 6.03 & 44.97 & 2.06 & 9.24e-1 & 1.41e-3 & 115 \\\hline
					1000 & 499500 &  7 & 23.59 & 9.91 & 6.32 & 45.20 & 2.15 & 8.65e-1 & 1.21e-3 & 142 \\\hline
					1000 & 499500 &  8 & 29.87 & 12.67 & 7.73 & 45.41 & 2.25 & 7.53e-1 & 8.24e-4 & 168 \\\hline
					1000 & 499500 &  9 & 47.11 & 18.59 & 12.53 & 45.64 & 2.34 & 5.64e-1 & 2.90e-4 & 238 \\\hline
					1000 & 499500 & 10 & 56.10 & 23.85 & 13.36 & 45.81 & 2.42 & 3.20e-1 & 3.26e-5 & 347 \\\hline
					2000 & 1999000 &  2 & 54.84 & 20.00 & 19.09 & 50.03 & 4.32 & 9.96e-1 & 7.98e-4 & 47 \\\hline
					2000 & 1999000 &  3 & 63.05 & 23.11 & 23.57 & 46.61 & 2.96 & 9.93e-1 & 7.97e-4 & 77 \\\hline
					2000 & 1999000 &  4 & 72.34 & 25.90 & 28.12 & 45.56 & 2.35 & 9.88e-1 & 8.09e-4 & 78 \\\hline
					2000 & 1999000 &  5 & 78.14 & 29.31 & 28.21 & 45.29 & 2.06 & 9.80e-1 & 8.11e-4 & 90 \\\hline
					2000 & 1999000 &  6 & 90.74 & 35.46 & 30.45 & 45.33 & 1.92 & 9.65e-1 & 8.01e-4 & 112 \\\hline
					2000 & 1999000 &  7 & 103.75 & 41.57 & 33.59 & 48.63 & 1.92 & 9.37e-1 & 7.18e-4 & 132 \\\hline
					2000 & 1999000 &  8 & 129.68 & 53.52 & 38.83 & 91.85 & 1.88 & 8.84e-1 & 5.54e-4 & 172 \\\hline
					2000 & 1999000 &  9 & 168.21 & 71.08 & 47.57 & 46.98 & 1.86 & 7.73e-1 & 2.46e-4 & 234 \\\hline
					2000 & 1999000 & 10 & 234.03 & 108.50 & 60.20 & 46.15 & 1.81 & 5.58e-1 & 3.43e-5 & 332 \\\hline
				\end{tabular}}
			\end{center}
		}
	\end{table}

It can be seen that as $r$ increases from $2$ to $10$, it takes more iterations, leading to more cputime. The cputime is mainly spent on subproblems and partial spectral decomposition. For each test, the partial spectral decomposition takes a bit less cputime than solving the subproblem, both of which increases slowly as $n$ grows. This verifies our claim that the partial {spectral} decomposition in calculating $\Pi_{\mathcal{K}^n_+(r)}(\cdot)$ has lower computational complexity than the full spectral decomposition. From $Kprog_k$ and $Fprog_k$, it can be observed that the stopping criteria is hardly satisfied. {This can  be partly explained by the feasibility of {EDMOC} (\ref{prob-2}). In other words, for  $r<n-2$, it is possible that  EDMOC may not have a nontrivial feasible solution. From the numerical point of view, it means that for a test problem with $r<n-2$, if the ordinal constraints are added randomly, it may be difficult for the algorithm to find a nontrivial solution, let alone to find a nontrivial feasible solution satisfying the stopping criteria.} 

\subsection{Applications}\label{sec5-3}
\

{\bf{Sensor Network Localization.}} One typical application of {EDMOC} is the sensor network localization problem, where the positions of some points are known (referred {to as} anchors), and the rest are unknown (referred {to as} sensors). We test Square Network which is widely tested \cite{Biswas2006Semidefinite}.  In our following test, we only consider the situation without anchors, {that is, $m=0$}.
$\Delta$ is generated in the same way as \cite{Zhou2017A,Bai2015Tackling}. Specifically,
the generation of the n sensors $(x_1,\dots,x_n)$ follows the uniform distribution over
the square region $[-0.5,0.5]\times [-0.5,0.5]$. {The element $\delta_{ij}$ in $\Delta$ is given by}
\be\label{eq-delta}\delta_{ij} := \|x_i-x_j\|\times |1+\epsilon_{ij}\times nf|,\ \ \forall (i, j) \in \mathscr{N}_x; \ \delta_{ij}=0, \hbox{ otherwise}\ee and
\be\mathscr{N}_x:=\{(i,j)\ |\ \|x_i-x_j\|\le R,\ i>j>m\},\ee
where $R$ is known as the radio range, $\epsilon_{ij}$ are independent standard normal random variables and $nf$ is the noise factor (see \cite{Zhou2017A}). 
This type of perturbation in $\delta_{ij}$ is known to be multiplicative and follows the unit-ball rule in defining $\mathscr{N}_x$. {Denote $$\Delta_{rate}:=\frac{the\ number\ of\ nonzero\ entries\ in\ \Delta}{the\ number\ of\ all\ entries\ in\ \Delta}$$ as the measure of {density}.}
{The ordinal constraints are added in the following way. First, we calculated the EDM $D$ based on $x_1, \dots, x_n$. Then we run Alg. \ref{alg-order} to get the ordinal constraints. By doing this, we can  guarantee the test problem to have a nontrivial feasible solution $D$.} 

{We select the well-known \texttt{SMACOF} \cite{Cox,Borg2010Modern,deLeeuw2009}, \texttt{SQREDM} \cite{Zhou2017A} and the Inexact Smoothing Newton Method (\texttt{ISNM}) \cite{LiQi2017}   for comparison due to their high-quality code and availability. \texttt{SMACOF} is a traditional method in dealing with MDS and NMDS and has a high reputation in experimental sciences. We use the enhanced implementation of \texttt{SMACOF} \cite{SMACOF}\footnote{The code can be downloaded from {http://tosca.cs.technion.ac.il}}. \texttt{ISNM} was proposed to solve convex EDM problems with only  ordinal constraints.  The latest \texttt{SQREDM} has superior performance than other methods in terms of both the speed and the accuracy as shown in \cite{Zhou2017A}.   The parameters in the four methods are set as follows.  For {each method}, 
	the weights are chosen as $W_{ij}=1$ if $\delta_{ij}>0$, otherwise, $W_{ij}=0$. In \texttt{SMACOF}, we set $rtol = 10^{-2}$, $iter = 10^{3}$ and its initial point is given by cMDS on $\Delta$. In \texttt{MPA} and \texttt{SQREDM}, we use the same stopping criteria as in (\ref{stop}) with $\epsilon_1=\epsilon_2=10^{-3}$ {and the minimum iterations 10}. Since \texttt{ISNM} solves a system of smoothing equations sequentially, so it has { the} different stopping criteria. We set comparable stopping criteria in the order of $10^{-3}$ {and set maximum iterations 100} to make the comparison reasonable. The ordinal constraints are generated by Alg. \ref{alg-order}. Other  parameters in \texttt{SQREDM}, \texttt{ISNM} and \texttt{SMACOF} are set as default.}

{To visualize data, we test {SNL example with $s=2$ and the embedding dimension $r=2$.  Recall that $nf$ defined as in (\ref{eq-delta}) is chosen as $nf=0.1$, corresponding to $10\%$ noise level.}  $R$ is chosen as $1.4$, $1$ and $0.2$. {We test the case of no anchors. For stability, we run each test 10 times and report the average results.} 
	
 For $R=1.4$, the results are shown in Table \ref{tab:SNL_R1.4}. 
\texttt{MPA}, \texttt{SMACOF} and \texttt{SQREDM} are much faster than \texttt{ISNM} (denoted as A1, A2, A3, A4 respectively), {whereas} \texttt{MPA} and \texttt{SMACOF} is slightly faster than \texttt{SQREDM}.
 For RMSD and rRMSD, \texttt{MPA} performs slightly better than \texttt{SMACOF}, \texttt{ISNM} and \texttt{SQREDM}. This is reasonable since \texttt{MPA} solves the model (\ref{prob-2}) whereas \texttt{ISNM} solves the convex relaxation problem (\ref{prob-edm-3}). Compared with \texttt{SQREDM}, \texttt{MPA} solves a different model, with ordinal constraints rather than bound constraints, which provide more information of the embedding points. In terms of rRMDS, it seems that the refinement step does not help for \texttt{MPA}, but indeed improves the performance of \texttt{SMACOF}, \texttt{ISNM} and \texttt{SQREDM}. {We can conclude that when the density $\Delta_{rate}$ is high, \texttt{MPA}, \texttt{SMACOF} and \texttt{SQREDM} can provide high quality solution in short time. }
Typical embedding results are demonstrated in Fig. \ref{fig:SNL_R1.4} with $R=1.4$ and $n=200$, where sensors $\{x_i\}$ in pink points are jointed to their corresponding true locations (blue circles).

\begin{table}[tbhp]
	{\footnotesize
		\caption{Results on SNL by four methods with $R=1.4$
} \label{tab:SNL_R1.4}
		\begin{center}
			\resizebox{\textwidth}{12mm}{
				\begin{tabular}{|c|c|c|c|c|c|c|} \hline
					\multirow{2}{*}{n}&\multirow{2}{*}{m}&\multirow{2}{*}{$\Delta_{rate}$} & t(s) & RMSD & rRMSD & Iter\\ \cline{4-7}
					~ &~&~ & A1$|$A2$|$A3$|$A4 & A1$|$A2$|$A3$|$A4 & A1$|$A2$|$A3$|$A4 & A1$|$A2$|$A3$|$A4 \\ \hline
					200 & 19900 &99.5\% & 0.15$|$0.12$|$0.29$|$40.82 & 3.7e-4$|$5.0e-2$|$1.7e-2$|$4.6e-2 & 1.5e-2$|$1.5e-2$|$1.5e-2$|$1.5e-2 &  10$|$  3$|$ 10$|$63.1 \\
					400 & 79800 &99.8\% & 0.76$|$0.75$|$1.15$|$434.07 & 1.3e-4$|$4.8e-2$|$1.2e-2$|$4.6e-2 & 1.1e-2$|$1.1e-2$|$1.1e-2$|$1.1e-2 &  10$|$  3$|$ 10$|$100 \\
					600 & 179700 &99.8\% & 3.08$|$4.63$|$3.87$|$1243.19 & 7.2e-5$|$4.8e-2$|$1.0e-2$|$4.6e-2 & 8.7e-3$|$8.7e-3$|$8.7e-3$|$8.7e-3 &  10$|$  3$|$ 10$|$100 \\
					800 & 319600 &99.9\%& 3.37$|$3.27$|$4.84$|$2783.22 & 4.5e-5$|$4.7e-2$|$8.9e-3$|$4.6e-2 & 7.4e-3$|$7.4e-3$|$7.4e-3$|$7.5e-3 &  10$|$  3$|$ 10$|$100 \\
					1000 & 499500 &99.9\% & 4.27$|$5.66$|$7.72$|$4748.70 & 3.4e-5$|$4.7e-2$|$8.1e-3$|$4.5e-2 & 6.8e-3$|$6.8e-3$|$6.8e-3$|$6.9e-3 &  10$|$  3$|$ 10$|$100 \\
					1500 & 1124250 &99.9\% & 9.56$|$8.66$|$14.43$|$- & 1.9e-5$|$4.7e-2$|$6.7e-3$|$- & 5.4e-3$|$5.4e-3$|$5.4e-3$|$- &  10$|$  3$|$ 10$|$- \\
					2000 & 1999000 &99.9\% & 12.87$|$17.23$|$24.31$|$- & 1.2e-5$|$4.7e-2$|$6.0e-3$|$- & 4.8e-3$|$4.8e-3$|$4.8e-3$|$- &  10$|$  3$|$ 10$|$-  \\ \hline
				\end{tabular}}
			\end{center}}
		\end{table}
\begin{table}[tbhp]
	{\footnotesize
		\caption{Results on SNL by four methods with $R=1.0$
		} \label{tab:SNL_R1}
		\begin{center}
			\resizebox{\textwidth}{12mm}{
				\begin{tabular}{|c|c|c|c|c|c|c|} \hline
					\multirow{2}{*}{n}&\multirow{2}{*}{m}&\multirow{2}{*}{$\Delta_{rate}$} & t(s) & RMSD & rRMSD & Iter\\ \cline{4-7}
					~ &~&~ & A1$|$A2$|$A3$|$A4 & A1$|$A2$|$A3$|$A4 & A1$|$A2$|$A3$|$A4 & A1$|$A2$|$A3$|$A4 \\ \hline
					200 & 19900 &97.1\% & 0.20$|$0.22$|$0.33$|$88.05 & 3.0e-3$|$1.8e-1$|$1.8e-2$|$1.1e-1 & 1.5e-2$|$8.5e-2$|$1.5e-2$|$1.6e-2 &  10$|$4.1$|$ 10$|$100 \\
					400 & 79800 &97.2\% & 0.86$|$1.29$|$1.19$|$557.94 & 2.4e-3$|$1.4e-1$|$1.5e-2$|$1.4e-1 & 1.0e-2$|$5.1e-2$|$1.0e-2$|$1.1e-2 &  10$|$4.5$|$ 10$|$100 \\
					600 & 179700 &97.4\% & 2.75$|$4.71$|$3.74$|$1214.40 & 2.0e-3$|$1.2e-1$|$1.3e-2$|$1.2e-1 & 8.7e-3$|$5.4e-2$|$8.7e-3$|$8.8e-3 &  10$|$4.5$|$ 10$|$100 \\
					800 & 319600 &97.4\% & 3.82$|$6.70$|$4.87$|$2692.03 & 1.9e-3$|$1.2e-1$|$1.3e-2$|$1.4e-1 & 7.5e-3$|$4.1e-2$|$7.5e-3$|$7.8e-3 &  10$|$4.8$|$ 10$|$100 \\
					1000 & 499500 &97.4\% & 6.14$|$14.24$|$7.98$|$5152.36 & 1.8e-3$|$1.1e-1$|$1.4e-2$|$1.2e-1 & 6.8e-3$|$5.3e-2$|$6.8e-3$|$6.6e-3 &  10$|$  5$|$ 10$|$100 \\
					1500 & 1124250 &97.5\% & 10.34$|$16.49$|$14.27$|$- & 1.5e-3$|$1.0e-1$|$1.4e-2$|$- & 5.5e-3$|$4.5e-2$|$5.5e-3$|$- &  10$|$  5$|$ 10$|$ - \\
					2000 & 1999000 &97.5\% & 18.69$|$26.88$|$26.11$|$- & 1.4e-3$|$1.0e-1$|$1.4e-2$|$- & 4.8e-3$|$4.2e-2$|$4.8e-3$|$- &  10$|$  5$|$ 10$|$ - \\ \hline
				\end{tabular}}
			\end{center}}
		\end{table}
		\begin{table}[tbhp]
			{\footnotesize
				\caption{Results on SNL by four methods with $R=0.2$} \label{tab:SNL_R0.2}
				\begin{center}
					\resizebox{\textwidth}{9mm}{
						\begin{tabular}{|c|c|c|c|c|c|c|} \hline
							\multirow{2}{*}{n}&\multirow{2}{*}{m}&\multirow{2}{*}{$\Delta_{rate}$}& t(s) & RMSD & rRMSD & Iter\\ \cline{4-7}
							~ &~&~ & A1$|$A2$|$A3$|$A4 & A1$|$A2$|$A3$|$A4 & A1$|$A2$|$A3$|$A4 & A1$|$A2$|$A3$|$A4 \\ \hline
							100 & 4950 &10.3\% & 12.79$|$0.17$|$7.72$|$158.65 & 2.7e-3$|$4.1e-1$|$1.7e-1$|$4.1e-1 & 7.2e-2$|$4.2e-1$|$1.6e-1$|$3.9e-1 & 1259.8$|$  2$|$796.8$|$100  \\
							200 & 19900 &10.6\%  & 32.03$|$0.47$|$21.84$|$359.57 & 3.2e-3$|$4.1e-1$|$2.4e-1$|$4.0e-1 & 5.3e-2$|$4.2e-1$|$2.4e-1$|$4.2e-1 & 2000$|$  2$|$1468.7$|$100  \\
							300 & 44850 &10.4\%  & 55.58$|$0.74$|$37.61$|$600.05 & 3.3e-3$|$4.1e-1$|$2.3e-1$|$4.2e-1 & 4.1e-2$|$4.2e-1$|$2.1e-1$|$4.4e-1 & 2000$|$  2$|$1407.5$|$100  \\
							400 & 79800 &10.3\%  & 75.10$|$1.16$|$58.39$|$1019.48 & 2.3e-1$|$4.1e-1$|$2.1e-1$|$4.0e-1 & 2.4e-1$|$4.1e-1$|$2.0e-1$|$4.5e-1 & 1529.3$|$  2$|$1374.3$|$100  \\
							500 & 124750 &10.5\%  & 91.15$|$1.80$|$71.94$|$1565.95 & 3.1e-1$|$4.1e-1$|$1.9e-1$|$4.1e-1 & 3.2e-1$|$4.2e-1$|$1.4e-1$|$4.0e-1 & 1210.8$|$  2$|$1199.6$|$100 \\ \hline
						\end{tabular}}
					\end{center}}
				\end{table}
{For $R=1.0$, as shown in Table \ref{tab:SNL_R1}, \texttt{MPA} and \texttt{SQREDM} can provide reasonably good embedding results.} After the refinement step, \texttt{SMACOF}'s and \texttt{ISNM}'s rRMSD become acceptable. {For $R=0.2$, most of the dissimilarity information is missing.}  Table \ref{tab:SNL_R0.2} {demonstrates that only \texttt{MPA}} can provide  good embedding result. It is easy to understand that smaller $R$ leads to more {computational time  and larger number of} iterations. 

To see the effect of $R$ in four methods, we increase $R$ from $0.2$ to $1.4$ by fixing $n=200$. The results in Fig.\ref{fig:SNL_R} demonstrate the trends of RMSD, rRMSD and Time. Both \texttt{MPA} and \texttt{SQREDM} are winners in terms of Time, RMSD and rRMSD. When $R$ is  small ($R<0.6$), only \texttt{MPA} can perform well. 

 Note that both \texttt{SQREDM} and \texttt{MPA} use the majorization technique and singular value decomposition. To give a further comparison, we report more details about  computational time of the two methods in Table \ref{tab:SNL_time} and Table \ref{tab:SNL_time_R50} with different $R$ (namely $R=140$ and $R=50$ respectively) and $nf=0.1$ in bigger square region $[-50,50]\times [-50,50]$ which allows us to test for larger number of points $n$. $Ave_{eig}$ is the average time per iteration for partial singular value decomposition and $Ave_{sub}$ is the average time per iteration for solving subproblem. {One may notice that in Table \ref{tab:SNL_time_R50} when $n$ is large, the cuptime for solving subproblems and partial singular decomposition only takes about $10\%$ of the total cuptime for both \texttt{MPA} and \texttt{SQREDM}. The reason is that when $n$ is large and density is medium, even getting a good starting point $D^0$ with all elements available spends large amount of time. Both ordering elements of $D^k$ and reordering back take time as well. Combining with Fig. \ref{fig:histogram}, as we can see, due to $\Delta_{rate}<90\%$, both two methods should call \texttt{graphallshortestpaths($\cdot$)} to determine whether the neighborhood graph of $\Delta$ is connected which dominates most of time.} \texttt{MPA} is faster than \texttt{SQREDM} in terms of total cputime. The two methods take comparable time for partial singular value decomposition, as demonstrated by $Ave_{eig}$ and $t_{eig}$. However, \texttt{MPA} takes less time in solving subproblem. {This can be explained by the fact that the computational complexity for solving subproblem of \texttt{MPA} is $O(\frac{n(n-1)}{2})$ , whereas that for the subproblem of \texttt{SQREDM} is $O(n^2s)$, where $s$ is the componentwise time complexity including basic operations and calling \texttt{cos($\cdot$)}, \texttt{arccos($\cdot$)}.}

\begin{figure}[htbp]
	\centering
	\subfigure[MPA]{
		\label{fig:SNL_EDMEDMOC}
		\includegraphics[height=2.5cm,width=5.7cm]{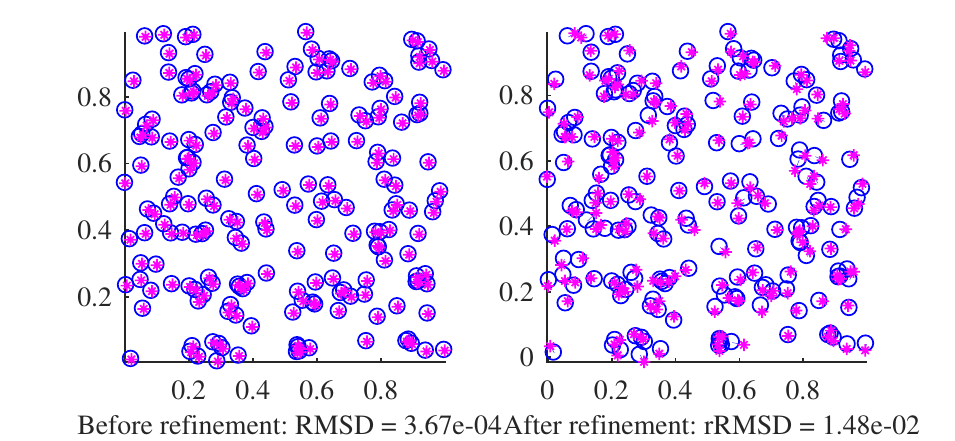}}
	\subfigure[SMACOF]{
		\label{fig:SNL_SMACOF}
		\includegraphics[height=2.5cm,width=5.7cm]{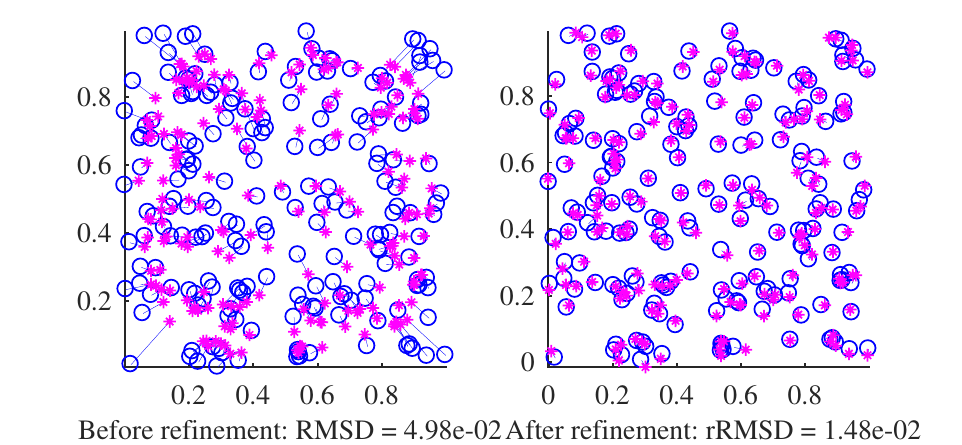}}
	\subfigure[SQREDM]{
		\label{fig:SNL_SQREDM}
		\includegraphics[height=2.5cm,width=5.7cm]{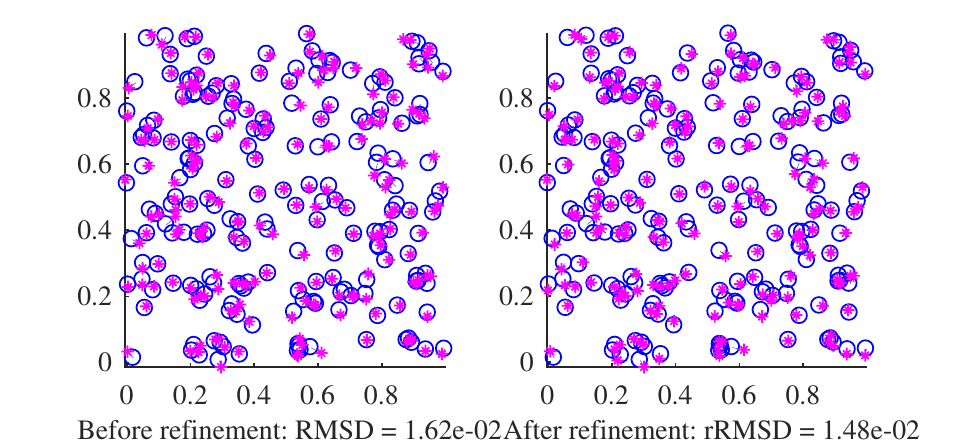}}
	\subfigure[ISNM]{
		\label{fig:SNL_ISNM}
		\includegraphics[height=2.5cm,width=5.7cm]{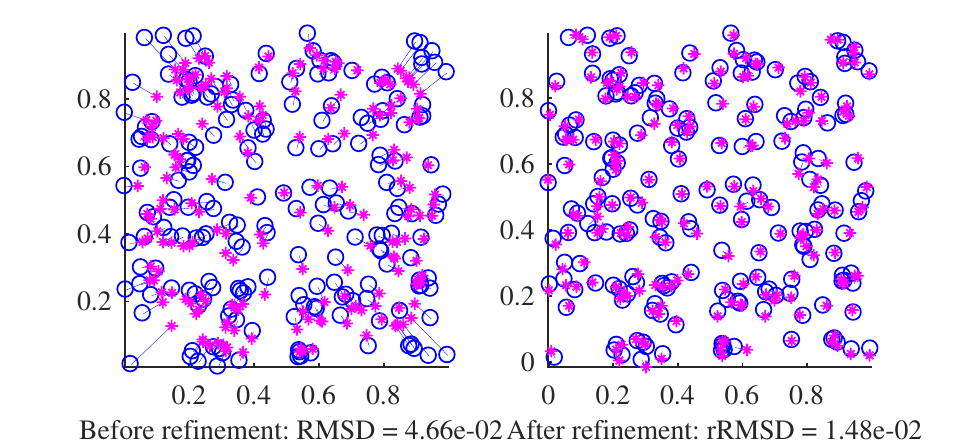}}
	\caption{Localization by four methods with n=200}	
	\label{fig:SNL_R1.4}
\end{figure}

\begin{figure}[htbp]
	\centering
	\includegraphics[height=3.5cm,width=3.5cm]{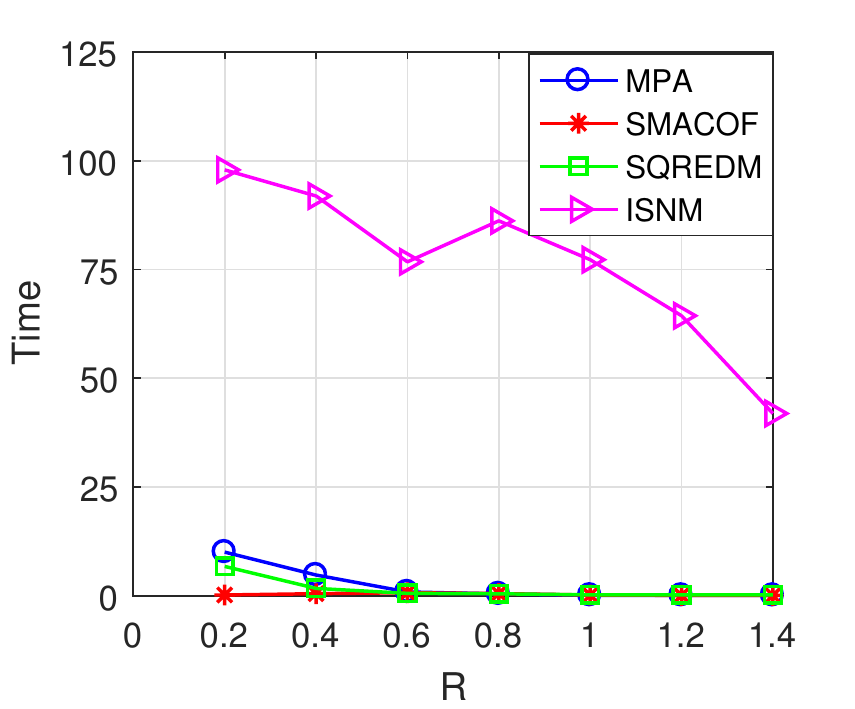}
	\includegraphics[height=3.5cm,width=3.5cm]{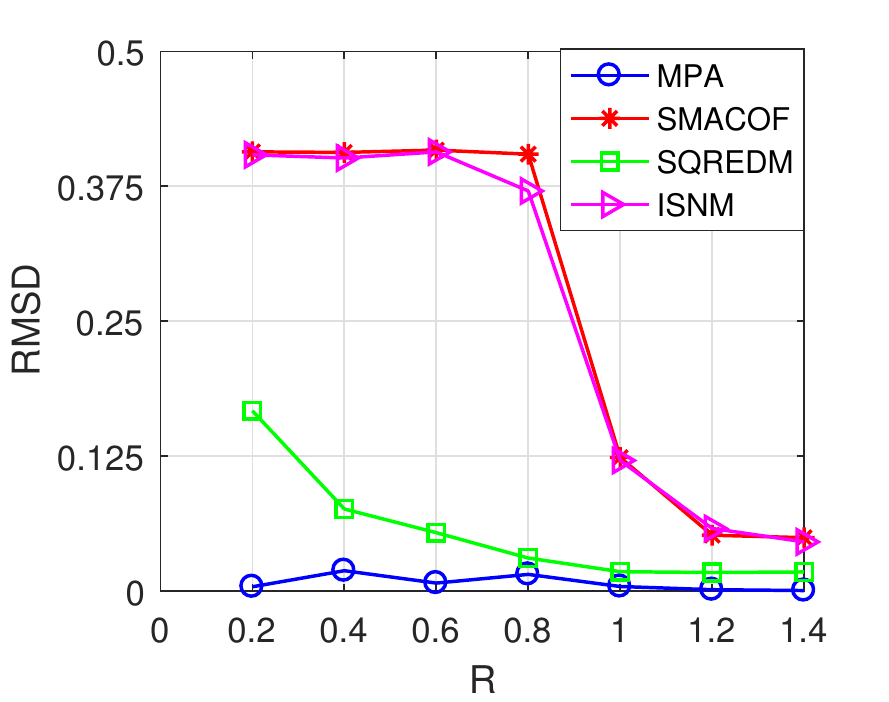}
	\includegraphics[height=3.5cm,width=3.5cm]{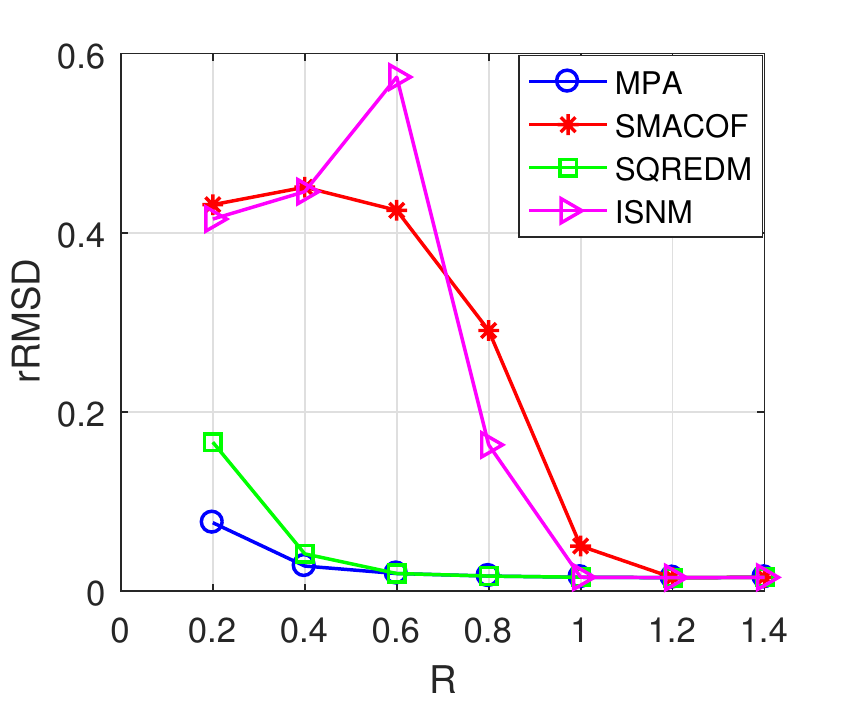}
	\caption{Comparisons of four methods with different $R$}	
	\label{fig:SNL_R}
\end{figure}
\begin{table}[tbhp]
	{\footnotesize
		\caption{The main time cost on SNL by \texttt{MPA} and \texttt{SQREDM} with $R=140$} \label{tab:SNL_time}
		\begin{center}
			\resizebox{\textwidth}{11mm}{
				\begin{tabular}{|c|c|c|c|c|c|c|c|c|c|} \hline
					\multirow{2}{*}{n}&\multirow{2}{*}{$\Delta_{rate}$}& t(s) &$t_{sub}(s)$&$t_{eig}(s)$&$Ave_{sub}$&$Ave_{eig}$& RMSD & rRMSD & Iter\\ \cline{3-10}
					~  &~  & A1$|$A3 & A1$|$A3 & A1$|$A3 & A1$|$A3& A1$|$A3& A1$|$A3& A1$|$A3& A1$|$A3 \\ \hline
		1000 &99.9\%& 7.99$|$11.63 & 0.95$|$4.30 & 5.33$|$5.31 & 0.09$|$0.43 & 0.53$|$0.53 & 1.7e-4$|$7.8e-3 & 6.7e-3$|$6.7e-3 & 10$|$10 \\
		1500 &99.9\%& 13.69$|$20.14 & 2.39$|$8.54 & 7.68$|$7.46 & 0.24$|$0.85 & 0.77$|$0.75 & 8.7e-5$|$6.4e-3 & 5.5e-3$|$5.5e-3 & 10$|$10 \\
		2000 &99.9\%& 19.91$|$32.27 & 4.96$|$16.10 & 9.04$|$8.87 & 0.50$|$1.61 & 0.90$|$0.89 & 5.4e-5$|$5.6e-3 & 4.7e-3$|$4.7e-3 & 10$|$10 \\
		3000 &99.9\%& 50.19$|$73.35 & 11.33$|$32.81 & 26.05$|$25.06 & 1.13$|$3.28 & 2.61$|$2.51 & 2.8e-5$|$4.6e-3 & 3.9e-3$|$3.9e-3 & 10$|$10 \\
		4000 &99.9\%& 72.62$|$114.68 & 18.56$|$47.25 & 33.92$|$40.54 & 1.86$|$4.73 & 3.39$|$4.05 & 1.8e-5$|$4.0e-3 & 3.4e-3$|$3.4e-3 & 10$|$10 \\
		5000 &99.9\%& 119.99$|$189.62 & 31.53$|$81.12 & 55.05$|$64.73 & 3.15$|$8.11 & 5.50$|$6.47 & 1.3e-5$|$3.6e-3 & 3.0e-3$|$3.0e-3 & 10$|$10 \\ \hline
				\end{tabular}}
			\end{center}}
		\end{table}
\begin{table}[tbhp]
	{\footnotesize
		\caption{The main time cost on SNL by \texttt{MPA} and \texttt{SQREDM} with $R=50$} \label{tab:SNL_time_R50}
		\begin{center}
			\resizebox{\textwidth}{11mm}{
				\begin{tabular}{|c|c|c|c|c|c|c|c|c|c|} \hline
					\multirow{2}{*}{n}&\multirow{2}{*}{$\Delta_{rate}$}& t(s) &$t_{sub}(s)$&$t_{eig}(s)$&$Ave_{sub}$&$Ave_{eig}$& RMSD & rRMSD & Iter\\ \cline{3-10}
					~  &~  & A1$|$A3 & A1$|$A3 & A1$|$A3 & A1$|$A3& A1$|$A3& A1$|$A3& A1$|$A3& A1$|$A3 \\ \hline
					1000 & 48.5\% & 10.72$|$12.56 & 0.88$|$2.58 & 0.53$|$0.55 & 0.09$|$0.26 & 0.05$|$0.05 & 1.5e-3$|$7.7e-2 & 9.7e-3$|$9.7e-3 & 10$|$10 \\ 
					1500 & 48.1\% & 33.02$|$36.51 & 2.05$|$5.48 & 1.21$|$1.23 & 0.20$|$0.55 & 0.12$|$0.12 & 1.4e-3$|$9.2e-2 & 7.9e-3$|$7.9e-3 & 10$|$10 \\ 
					2000 & 48.2\% & 73.92$|$79.91 & 3.72$|$9.38 & 2.08$|$2.14 & 0.37$|$0.94 & 0.21$|$0.21 & 1.4e-3$|$1.0e-1 & 6.8e-3$|$6.8e-3 & 10$|$10 \\ 
					3000 & 48.4\% & 238.90$|$250.80 & 8.76$|$20.78 & 4.82$|$4.80 & 0.88$|$2.08 & 0.48$|$0.48 & 1.5e-3$|$1.3e-1 & 5.6e-3$|$5.6e-3 & 10$|$10 \\ 
					4000 & 48.3\% & 598.16$|$597.84 & 20.99$|$39.40 & 20.52$|$20.20 & 2.10$|$3.94 & 2.05$|$2.02 & 1.5e-3$|$1.5e-1 & 4.8e-3$|$4.8e-3 & 10$|$10 \\ 
					5000 & 48.6\% & 1191.88$|$1246.91 & 47.93$|$99.07 & 70.45$|$87.05 & 4.79$|$9.91 & 7.04$|$8.70 & 1.6e-3$|$1.7e-1 & 4.3e-3$|$4.3e-3 & 10$|$10 \\ \hline
				\end{tabular}}
			\end{center}}
		\end{table}
\begin{figure}[htbp]
	\centering
	\includegraphics[height=6cm,width=7cm]{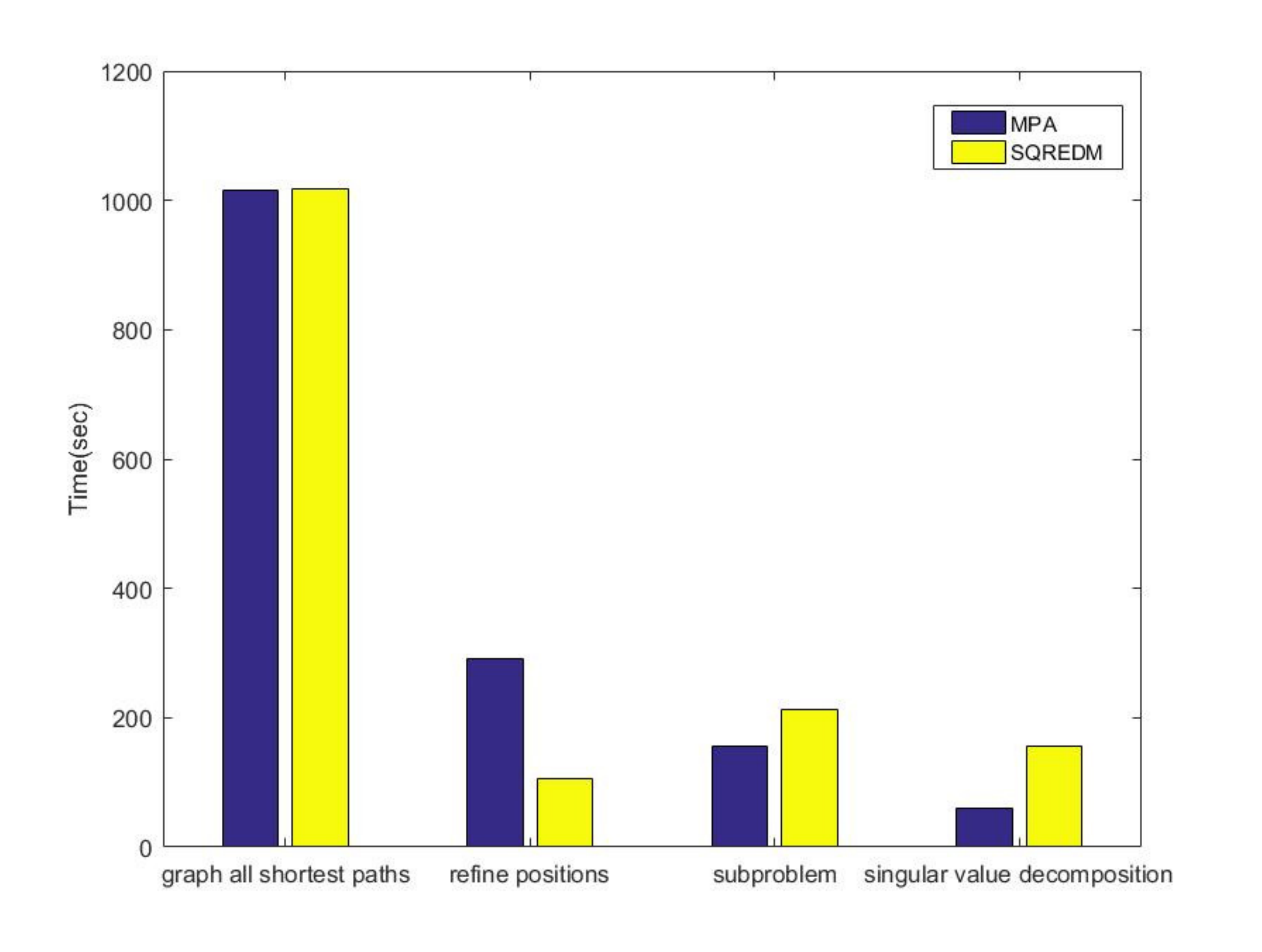}
	\caption{Comparison of \texttt{MPA} and \texttt{SQREDM} with $n=5000$ and $R=50$}	
	\label{fig:histogram}
\end{figure}		
		
{\bf{Molecular Conformation.}} Molecular conformation has long been an important application of EDM optimization \cite{Glunt2010Molecular}. These problems represent a very challenging set of embedding problems in three dimensions (r = 3). We collect real data of 20 molecules derived from 12 structures of proteins from the Protein Data Bank \cite{Helen2000The}. We generate $\Delta$ following the way as in \cite{Zhou2017A}, and take $s=r=3$. 

Similar to SNL test, we denote $\mathscr{N}_x$ as the set formed by indices of measured distances. If $\delta_{ij}>0$, let $W_{ij}=1$. Otherwise, $W_{ij}=0$. The noise factor $nf=0.1$. The ordinal constraints are generated in the same way as in SNL.
We compare \texttt{MPA} with \texttt{SQREDM}. For \texttt{SQREDM}, all parameters are {set as} default. The results are reported in Table \ref{Tab:MC}. It can be observed that \texttt{MPA} {performs} better {in terms of} RMSD and rRMSD. This is reasonable since our model includes ordinal information while \texttt{SQREDM} solves EDM model (\ref{prob-qi}) with box constraints. As for cputime, \texttt{MPA} is slightly faster than \texttt{SQREDM}. Typical results are demonstrated in Fig. \ref{fig:MC_EDMOC}, \ref{fig:MC_SQREDM}.

\begin{table}[tbhp]
	{\footnotesize
		\caption{Results on MC by \texttt{MPA} and \texttt{SQREDM}} \label{Tab:MC}
		\begin{center}
				\begin{tabular}{|c|c|c|c|c|c|c|} \hline
					
					\multirow{2}{*}{Protein} & \multirow{2}{*}{n} & \multirow{2}{*}{m} & \multirow{2}{*}{$\Delta_{rate}$} & t(s) & RMSD & rRMSD \\ \cline{5-7}
					~ &~ &~&~  & A1$|$A3 & A1$|$A3 & A1$|$A3 \\ \hline
		1PBM & 126 & 7875 &12.5\%& 0.12$|$0.17 & 6.63e-2$|$1.60 & 1.58e-1$|$3.44e-1 \\
		5BNA & 243 & 29403 &6.2\%& 0.19$|$0.25 & 9.67e-1$|$3.33 & 6.22e-1$|$2.53 \\
		1PTQ & 402 & 80601 &4.4\%& 0.34$|$0.58 & 4.59e-3$|$8.28e-1 & 1.53e-1$|$2.75e-1 \\
		1LFB & 641 & 205120 &2.8\%& 0.72$|$1.15 & 2.11e-2$|$1.39 & 1.54e-1$|$4.74e-1 \\
		1PHT & 666 & 221445 &2.8\%& 0.81$|$1.22 & 8.83e-2$|$1.76 & 1.45e-1$|$1.13 \\
		1DCH & 806 & 324415 &2.4\%& 1.13$|$1.70 & 2.08e-2$|$1.02 & 1.45e-1$|$2.08e-1 \\
		1HQQ & 891 & 396495 &2.1\%& 1.28$|$1.95 & 5.32e-3$|$1.47 & 1.48e-1$|$5.65e-1 \\
		1POA & 914 & 417241 &2.0\%& 1.30$|$2.05 & 3.15e-2$|$1.42 & 1.36e-1$|$3.65e-1 \\
		1RHJ & 1113 & 618828 &1.5\%& 2.06$|$2.72 & 1.81e-1$|$3.84 & 1.56e-1$|$3.42 \\
		1TJO & 1394 & 970921 &1.3\%& 2.49$|$4.08 & 9.09e-2$|$3.02 & 1.41e-1$|$2.71 \\
		1TIM & 1870 & 1747515 &1.0\%& 4.39$|$6.99 & 1.92e-2$|$1.16 & 1.30e-1$|$3.46e-1 \\
		1RGS & 2015 & 2029105 &0.9\%& 5.04$|$8.19 & 1.07e-2$|$1.87 & 1.23e-1$|$6.09e-1 \\
		1TOA & 2147 & 2303731 &0.9\%& 6.03$|$9.58 & 2.00e-2$|$1.19 & 1.28e-1$|$3.44e-1 \\
		1NFB & 2833 & 4011528 &0.6\%& 11.09$|$15.65 & 8.08e-2$|$3.53 & 1.49e-1$|$2.77 \\
		1KDH & 2846 & 4800351 &0.7\%& 14.24$|$16.21 & 4.22e-2$|$2.97 & 1.19e-1$|$1.28 \\
		1PBB & 3099 & 5915080 &0.6\%& 11.97$|$18.57 & 4.37e-2$|$1.69 & 1.20e-1$|$4.78e-1 \\
		1NF7 & 3440 & 6126750 &0.5\%& 14.83$|$23.41 & 2.67e-2$|$5.23 & 1.28e-1$|$4.57 \\
		1NFG & 3501 & 16134040 &0.6\%& 15.90$|$23.98 & 1.66e-2$|$1.06 & 1.12e-1$|$2.95e-1 \\
		1QRB & 4119 & 8481021 &0.5\%& 30.54$|$38.94 & 1.64e-1$|$7.21 & 1.85e-1$|$6.88 \\
		1MQQ & 5681 & 16134040 &0.4\%& 76.20$|$99.32 & 2.27e-2$|$1.32 & 1.10e-1$|$3.36e-1 \\  \hline
				\end{tabular}
			\end{center}
		}
	\end{table}

\begin{figure}[htbp]	
	\centering
	\includegraphics[height=5.5cm,width=12.8cm]{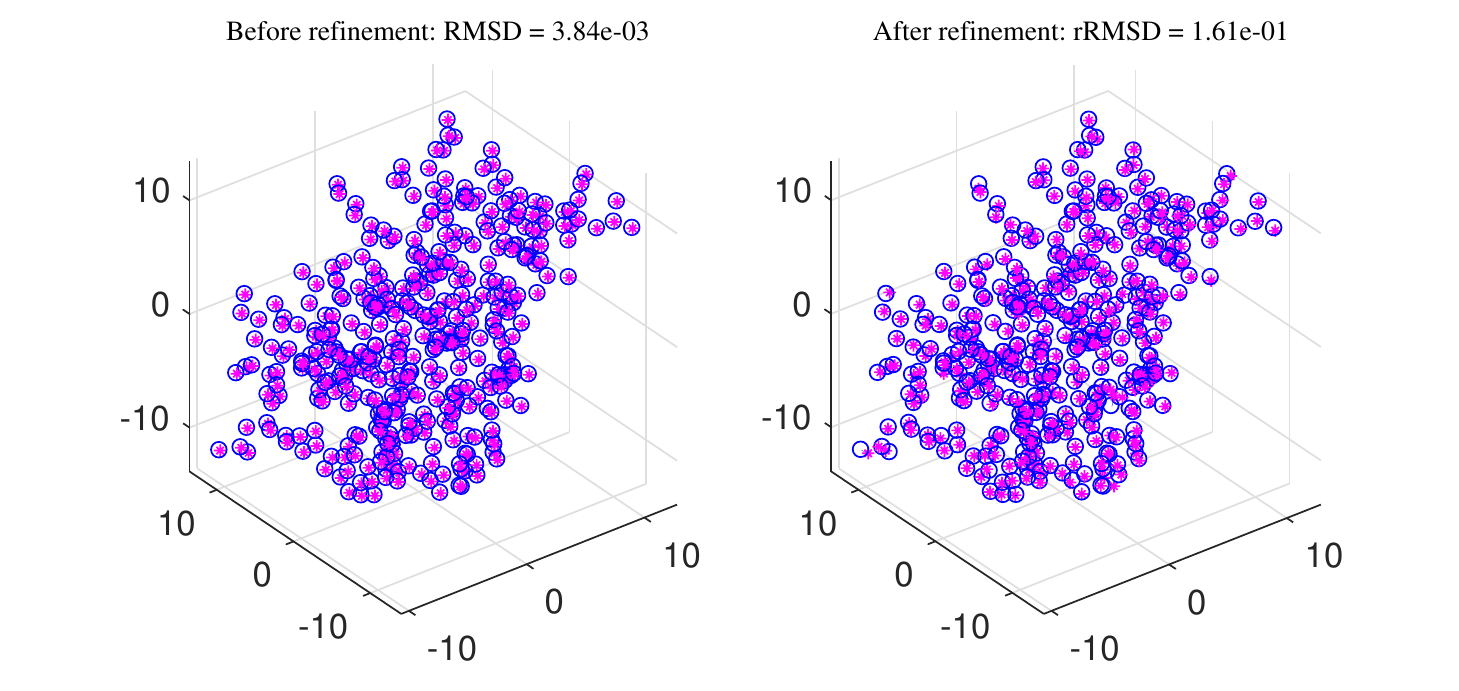}
	\caption{1PTQ by \texttt{MPA}}
	\label{fig:MC_EDMOC}
\end{figure}
\begin{figure}[htbp]	
	\centering
	\includegraphics[height=5.5cm,width=12.8cm]{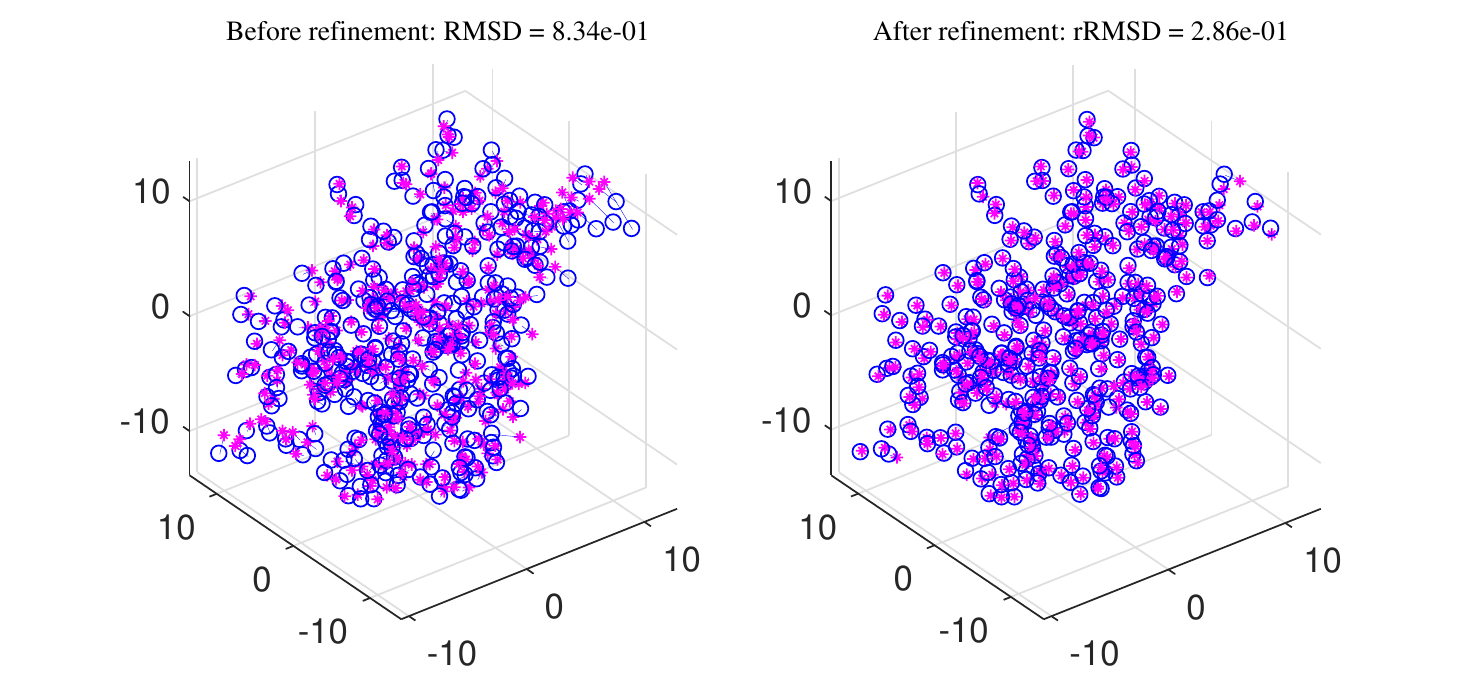}
	\caption{1PTQ by \texttt{SQREDM}}
	\label{fig:MC_SQREDM}
\end{figure}

\section{Conclusions}\label{sec-6}
	In this paper, we studied the Euclidean distance matrix optimization with ordinal constraints, which is of great importance in both theory and application. We investigated the feasibility of EDMOC systematically. As far as we know, this is the first time that the feasibility of EDMOC has been investigated. We showed that a nonzero solution always exists for EDMOC without the rank constraint. For the full ordinal constraints case, we showed that a nontrivial solution exists for $r \ge n-2$. An example was given for $r<n-2$ showing that EDMOC may only admit zero feasible solution.
We developed a majorized penalty method to solve large-scale EDMOC model and convex EDM optimization problem into an isotonic regression problem. Due to the partial spectral decomposition and the fast solver for isotonic regression problem, the performance of the proposed algorithm has been demonstrated to be superior both in terms of the solution quality and cputime in sensor network localization and molecular conformation.

 {Note that  the feasibility of EDMOC in the case of $r<n-2$ is only partly answered}. Given the fact that full ordinal constraints may result in only zero solutions, it brings another deep and challenging question: to make EDMOC have a nonzero solution, how to select proper ordinal constraints?
These questions will be investigated in future.

\appendix
\

\section{Readmap of Proof}
Due to the fact that $\mathcal{K}_+^n(n-2)\subset \mathcal{K}_+^n(n-1)=\mathcal{K}_+^n$, we only need to show the result holds for $r=n-2$.
For any {ordinal constraints}, we consider the situation when $r=n-2$. We {will} construct $n$ points in $\Re^{n-2}$ satisfying that the largest pairwise distance is strictly larger than the others and the rest are equal to each other. By relabelling the vertices properly, the resulting EDM can satisfy the required ordinal constraints.

To construct the points as required, inspired by { the regular simplex} whose edges are equal {to each other}, { we start from a regular simplex in $\Re^{n-3}$, and} add two extra vertices in a higher dimensional space such that each of the extra vertices together with the original regular simplex form a regular simplex in a higher dimensional space {$\Re^{n-2}$}. In this way, the distance between the two additional vertices is proved to be larger than the rest distances. It results in a polyhedron which is formed by two regular simplices in $\Re^{n-1}$ who share $n-2$ vertices. The $n$ vertices of the polyhedron are the points that we are looking for. Fig. \ref{triangleO}, \ref{tetrahedron}, \ref{All the five points} demonstrate the process when $n=5$.

Before we give the details of the proof of Theorem \ref{thm-3}, we start with the following lemmas.
\begin{lemma}\label{lem-7-1}
		Let $\{b_k\}$ {be} a nonnegative sequence generated as follows
	\be\label{lemma1-b}	
	\left\{
	\begin{aligned}
		b_1 \ \ = \ \ \frac{t}{2} \\
		b_k^2 +(\sqrt{t^2-b_k^2}-b_{k+1})^2& = & b_{k+1}^2, \ \ k=1,2\dots,
	\end{aligned}
	\right.	
	\ee
	where $t>0$.
	Then $\{b_k\}$ is an increasing sequence and converges to $\frac{\sqrt{2}}{2}t$.
	
	\begin{proof}[Proof of Lemma \ref{lem-7-1}] The update in ({\ref{lemma1-b}}) implies that
		\begin{displaymath}
		0\le b_k \le t,\, \,k=1,2\dots.
		\end{displaymath}
		{The second equation in (\ref{lemma1-b}) also gives the increasing order of $\{b_k\}$, i.e.,}
		\begin{displaymath}
		b_1<b_2<\dots<b_k<\dots.
		\end{displaymath}
		In other words, $\{b_k\}$ is an increasing sequence with upper bound $t$. Therefore, $\{b_k\}$ has a limit as $k\to \infty$. Suppose the limit is $b$. By taking limit in the second equation in (\ref{lemma1-b}), we can get \[t^2=2b\sqrt{t^2-b^2}.\] This gives {the} solution $b=\frac{\sqrt{2}}{2}$t. The proof is finished.
	\end{proof}
	{\bf{Remark.}} Equations in (\ref{lemma1-b}) imply that for all $k=1,2,\dots$, $$b_k<\frac{\sqrt{2}}{2}t.$$
\end{lemma}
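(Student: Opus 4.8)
The plan is to first rewrite the implicit recursion in \reff{lemma1-b} in an explicit, manageable form and then invoke the monotone convergence theorem. Expanding the squared term in the second equation of \reff{lemma1-b} gives
\[
b_k^2 + (t^2 - b_k^2) - 2b_{k+1}\sqrt{t^2 - b_k^2} + b_{k+1}^2 = b_{k+1}^2,
\]
and after cancellation this collapses to $t^2 = 2b_{k+1}\sqrt{t^2 - b_k^2}$, i.e.
\[
b_{k+1} = \frac{t^2}{2\sqrt{t^2 - b_k^2}},
\]
provided $t^2 - b_k^2 > 0$. So the whole lemma reduces to studying the one-step map $\phi(b) = t^2/(2\sqrt{t^2 - b^2})$ on the interval where it is well defined.

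First I would establish, by induction on $k$, the two-sided bound $0 < b_k < \frac{\sqrt{2}}{2}t$ (which is also the content of the closing Remark). The base case $b_1 = t/2$ is immediate since $1/2 < \sqrt{2}/2$. For the inductive step, if $b_k^2 < t^2/2$ then $t^2 - b_k^2 > t^2/2 > 0$, so $\phi(b_k)$ is well defined and positive, and $\sqrt{t^2 - b_k^2} > t/\sqrt{2}$ yields $b_{k+1} = \phi(b_k) < t^2/(2\cdot t/\sqrt{2}) = \frac{\sqrt{2}}{2}t$. This single induction simultaneously guarantees that the square root in \reff{lemma1-b} never breaks down, which resolves the only genuine well-definedness concern.

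Next I would prove monotonicity. The inequality $b_{k+1} > b_k$ is equivalent to $t^2 > 2b_k\sqrt{t^2 - b_k^2}$; since both sides are nonnegative, I may square to obtain the equivalent statement $t^4 > 4b_k^2(t^2 - b_k^2)$, which rearranges to $(t^2 - 2b_k^2)^2 > 0$. Because $b_k < \frac{\sqrt{2}}{2}t$ forces $t^2 - 2b_k^2 \neq 0$, this strict inequality holds, so $\{b_k\}$ is strictly increasing. Being increasing and bounded above by $\frac{\sqrt{2}}{2}t$, the sequence converges to a limit $b$ with $b \le \frac{\sqrt{2}}{2}t$.

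Finally, letting $k \to \infty$ in $b_{k+1} = t^2/(2\sqrt{t^2 - b_k^2})$ gives $2b\sqrt{t^2 - b^2} = t^2$; squaring once more produces $(t^2 - 2b^2)^2 = 0$, hence $b^2 = t^2/2$ and, taking the positive root, $b = \frac{\sqrt{2}}{2}t$. The only step requiring real care is the interlocking of well-definedness with the upper bound: the square root in the recursion only makes sense once $b_k < t$ is known, yet that bound is itself a consequence of the recursion, so the argument must be organized as a single induction that verifies the bound and uses it to justify the next application of $\phi$ in one pass. The two algebraic simplifications — collapsing the implicit recursion and the factorization $(t^2 - 2b^2)^2$ — are the mechanical core but are entirely routine.
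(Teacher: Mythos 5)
Your proof is correct and takes essentially the same route as the paper's: both establish that $\{b_k\}$ is increasing and bounded, invoke monotone convergence, and then pass to the limit in the recursion to solve $t^2 = 2b\sqrt{t^2-b^2}$, obtaining $b=\frac{\sqrt{2}}{2}t$. If anything, your write-up is more complete than the paper's, since the paper merely asserts the bound $0\le b_k\le t$ and the strict monotonicity and relegates the sharper bound $b_k<\frac{\sqrt{2}}{2}t$ to an unproved remark, whereas your single induction on $b_k<\frac{\sqrt{2}}{2}t$ simultaneously justifies the well-definedness of the square root, the strict inequality $(t^2-2b_k^2)^2>0$ behind monotonicity, and the remark itself.
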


\begin{lemma}\label{lemma2}
	Let ${x_1^{(k)},\dots,x_{k+1}^{(k)}}\subseteq \Re^k$ be a set of points satisfying \be\label{equ3}\|x_i^{(k)}-x_j^{(k)}\|=t,\, i\ne j,\ee
where the {superscript} stands for the dimension of vectors. Denote the centroid of $\{x_1^{(k)},\dots, x_{k+1}^{(k)}\}$ as $O^{(k)}$.
	Let the distance between $O^{(k)}$ and $x_i^{(k)}$ {be} $b_k$. We have the following equations
	$$\left\{
	\begin{array}{ll}
	b_1 = \frac{t}{2},& \\
	b_k^2 +(\sqrt{t^2-b_k^2}-b_{k+1})^2 = b_{k+1}^2,& \ k=1,2\dots.
	\end{array}
	\right.
	$$
\end{lemma}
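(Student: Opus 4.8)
The plan is to read $b_k$ as the circumradius of a regular $k$-simplex of edge length $t$, and to obtain the stated recursion by realizing a regular $(k+1)$-simplex as the given regular $k$-simplex with one extra apex glued on in an orthogonal direction. First I would dispose of the base case: for $k=1$ the two points $x_1^{(1)},x_2^{(1)}\in\Re$ are at distance $t$, their centroid $O^{(1)}$ is the midpoint, and hence $b_1=t/2$, which is the first equation.

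For the recursive step, I would embed the given configuration $\{x_1^{(k)},\dots,x_{k+1}^{(k)}\}$ into the hyperplane $\Re^k\times\{0\}\subseteq\Re^{k+1}$, so that its centroid sits at $(O^{(k)},0)$ and each $x_i^{(k)}$ is at distance $b_k$ from it within the hyperplane. I then introduce a new vertex $x_{k+2}$ on the perpendicular axis through $O^{(k)}$, i.e.\ at $(O^{(k)},h)$, and choose the height $h$ so that this vertex is at distance $t$ from every old vertex. Since the horizontal offset is $b_k$ and the vertical offset is $h$, this forces $b_k^2+h^2=t^2$, i.e.\ $h=\sqrt{t^2-b_k^2}$; with this choice all $\binom{k+2}{2}$ pairwise distances equal $t$, so the $k+2$ points form a regular $(k+1)$-simplex of edge $t$, whose circumradius is exactly $b_{k+1}$.

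It then remains to extract the recursion. By the symmetry of the regular simplex, its centroid $O^{(k+1)}$ coincides with its circumcenter, hence is equidistant from all vertices at distance $b_{k+1}$, and lies on the perpendicular axis, say at height $c$. Measuring $b_{k+1}$ from the apex $x_{k+2}$ gives $b_{k+1}=h-c$, hence $c=h-b_{k+1}$; measuring $b_{k+1}$ from an old vertex $x_i^{(k)}$ gives $b_{k+1}^2=b_k^2+c^2$. Substituting $c=h-b_{k+1}$ and $h=\sqrt{t^2-b_k^2}$ yields
\[
b_{k+1}^2 = b_k^2 + \left(\sqrt{t^2-b_k^2}-b_{k+1}\right)^2,
\]
which is the claimed identity.

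The hard part is not the algebra but the justification of the two symmetry facts used above: that the centroid of the enlarged configuration is its circumcenter, and that it lies on the orthogonal axis through $O^{(k)}$. I would settle the second directly, since $O^{(k+1)}$ is the weighted average of the base centroid $(O^{(k)},0)$ and the apex $(O^{(k)},h)$, both sharing the first $k$ coordinates $O^{(k)}$, so $O^{(k+1)}=\bigl(O^{(k)},\,h/(k+2)\bigr)$ indeed lies on the axis; the first fact then follows either from vertex-transitivity of the regular simplex under its isometry group, or simply from the explicit check that the apex and each base vertex are both at distance $\sqrt{b_k^2+(h/(k+2))^2}$ from $O^{(k+1)}$. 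This same equidistance also shows that $b_k$ is well defined, i.e.\ independent of $i$, which the statement tacitly assumes.
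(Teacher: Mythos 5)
Your proof is correct and follows essentially the same construction as the paper's: embed the given regular $k$-simplex in $\Re^{k+1}$, adjoin an apex at height $h=\sqrt{t^2-b_k^2}$ over the centroid so that all new edges have length $t$, and read off the recursion from the position of the new centroid on the orthogonal axis. You are in fact slightly more careful than the paper, which delegates the equidistance claims to a figure and whose final displayed identity contains a typo ($t^2$ on the left where $b_{k+1}^2$ is meant).
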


\begin{proof}[Proof of Lemma \ref{lemma2}] Without the loss of generality, let $O^{(k)}$ lie in the {origin}. Then $b_k=\|O^{(k)}-x_i^{(k)}\|, \ \ i=1,\dots,k+1$. Now define ${x_1^{(k+1)},\dots,x_{k+2}^{(k+1)}}\subseteq \Re^{k+1}$ as follows
	$$x_i^{(k+1)}=
	\begin{bmatrix}
	x_i^{(k)} \\
	0
	\end{bmatrix}, \ \ i=1,\dots,k+1,\ \ \
{x^{(k+1)}_{k+2}}=
	\begin{bmatrix}
	{O^{(k)}} \\
	{h_{k+1}}
	\end{bmatrix}, \ \ h_{k+1}>0.$$
	By letting $h_{k+1}=\sqrt{t^2-b_k^2}$, there is
	\be\label{equ4}\|x_i^{(k+1)}-x_j^{(k+1)}\|=t,\,\ \forall\ \ i<j, \ \ i,\ j=1,\dots,k+2.\ee
 Let the centroid of ${x_1^{(k+1)},\dots,x_{k+2}^{(k+1)}}$ be $O^{(k+1)}$. {There is $$O^{(k+1)}=\frac{1}{k+2}\sum_{i=1}^{k+2}x^{(k+1)}_i.$$
		Notice \[{\hat{O}^{(k)}}:=\begin{bmatrix}
	O^{(k)}\\
	0
	\end{bmatrix}
	=\frac{1}{k+1}\sum_{i=1}^{k+1}x^{(k+1)}_i.
	\]We get  $O^{(k+1)}=\frac{k+1}{k+2}\hat{O}^{(k)}+\frac{1}{k+2}x_{k+2}^{(k+1)}$ which implies that $O^{(k+1)}$ lies between $\hat{O}^{(k)}$ and $x_{k+2}^{(k+1)}$.} The geometric relation of $O^{(k+1)},\hat{O}^{(k)},x_i^{(k+1)},x_{k+2}^{(k+1)}$ is {demonstrated} in Fig. \ref{fig-thm-7}.
\begin{figure}[htbp]
		\centering
		\includegraphics[height=5.5cm,width=4.3cm]{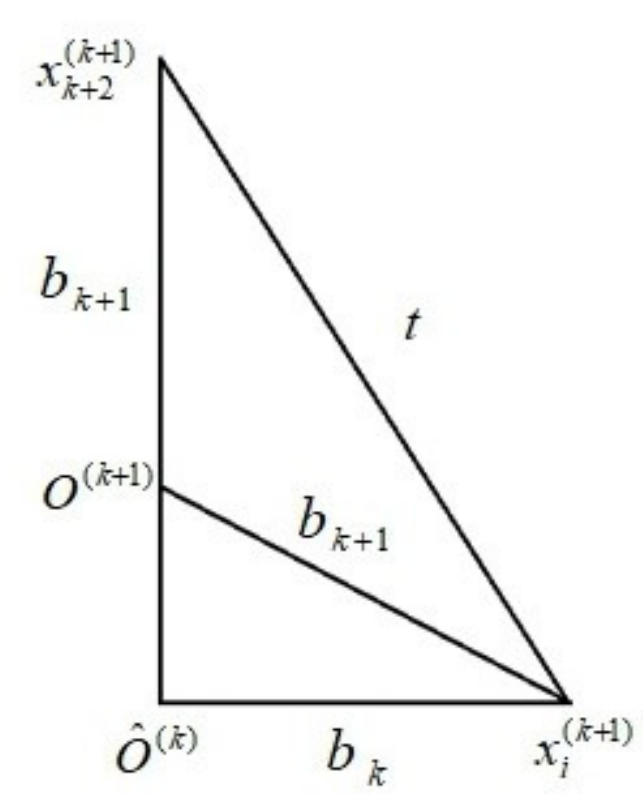}
		\caption{Geometric relation of $O^{(k+1)},\ \hat{O}^{(k)},\ x_i^{(k+1)},\ x_{k+2}^{(k+1)}$}
		\label{fig-thm-7}
\end{figure}	
	
	 Consequently, we have the following relationship $$t^2=b_k^2+(\sqrt{t^2-b_k^2}-b_{k+1})^2.$$ In particular, if k=1, $b_1=\frac{t}{2}$. The proof is finished.
\end{proof}

Next we give the proof of Theorem \ref{thm-3}.
\begin{proof}[\bf Proof of Theorem \ref{thm-3}]Note that $\mathcal{K}_+^n(n-2)\subset \mathcal{K}_+^n(n-1)=\mathcal{K}_+^n$, we only need to show the result holds for $r=n-2$.

Let $r=n-2$. To show the result, let's first construct a nontrivial feasible solution $D$ satisfying
\be\label{equ6}
\left\{
\begin{array}{ll}
D_{12}>D_{ij},&\ \ \forall\ (i,j)\ne(1,2),\ \,\ i<j, \\
D_{ij}=D_{sk},&\ \ \forall\ (i,j),\ (s,k)\ne(1,2),\, i<j,\ \, s<k.
\end{array}
\right.
\ee
To the end, we can first pick up a set of points $\{ x_1^{(n-3)},\dots,x_{n-2}^{(n-3)} \}\subseteq \Re^{n-3}$ satisfying (See Fig. \ref{triangleO} for $n=5$) \[\|x_i^{(n-3)}-x_j^{(n-3)}\|=t,\ \ \forall\ \ i<j,\, i,\ j=1,\dots,{n-2}.\] This is already proved in Theorem \ref{thm-1}. Moreover, denote the centroid of $x_1^{(n-3)},$ $\dots,$ $
x_{n-2}^{(n-3)}$ as $O^{(n-3)}$. Suppose $O^{(n-3)}$ is located at the origin. As we defined above, let \[b_{n-3}=\|O^{(n-3)}-x_i^{(n-3)}\|, \, \forall\ i=1,\dots,n-2.\] Now let $x_i^{(n-2)}\in\Re^{n-2},\ i=1,\dots,n-1$ be generated as follows (See Fig. \ref{tetrahedron} and Fig. \ref{All the five points} for $n=5$)

\begin{equation*}
x_i^{(n-2)}=
\begin{bmatrix}
x_i^{(n-3)} \\
0
\end{bmatrix}, \ \ i=1,\dots,n-2,\ \ x_{n-1}^{(n-2)}=
\begin{bmatrix}
{O^{(n-3)}} \\
h
\end{bmatrix}
, \ \ x_n^{(n-2)}=
\begin{bmatrix}
{O^{(n-3)}} \\
-h
\end{bmatrix} ,\\ h>0.
\end{equation*}
Then we have $$(\hat{O}^{(n-3)}-x_{n-1}^{(n-2)})^T(x_i^{(n-2)}-x_j^{(n-2)})=0,\ \ \forall \, i<j,\ \ i,\ j=1,\dots,{n-2},$$ where \[{\hat{O}^{(n-3)}=\begin{bmatrix}
O^{(n-3)}\\
0
\end{bmatrix}
\in \Re^{n-2}.}
\]
By letting $h=\sqrt{t^2-b_{n-3}^2}$, we get $$\|x_i^{(n-2)}-x_j^{(n-2)}\|=t,\ \, \forall\ i<j, \ \ i,\ j=1,\dots,n,\ \ (i,j)\ne{(n-1,n)}.$$
Next, we will show that \[\|x_{n-1}^{(n-2)}-x_{n}^{(n-2)}\|>t.\]
By Lemma \ref{lemma2}, we have (\ref{equ3}) for $b_k$ and $b_{k+1}$, implying that $b_{n-3}<\frac{\sqrt{2}t}{2}$ {by Lemma \ref{lem-7-1}}. Therefore, $h=\sqrt{t^2-b_{n-3}^2}>\frac{\sqrt{2}t}{2}$ for all $n\ge4$. In other words,
\[\|x_n^{(n-2)}-x_{n-1}^{(n-2)}\|=2h>\sqrt{2}t>t.\]
Consequently, the EDM generated by $\{x_1^{(n-2)},\dots,x_n^{(n-2)}\}\subset\Re^{n-2}$ satisfies (\ref{equ6}).
For any $\pi(n)=\{(i_1,j_1),\dots,(i_m,j_m)\}$, to get a nontrivial solution of $F(\pi(n),n-2)$, we construct $$\hat{D}_{i_1j_1}=\|x_n^{(n-2)}-x_{n-1}^{(n-2)}\|^2, \quad \hat{D}_{ij}=\|x_i^{(n-2)}-x_j^{(n-2)}\|^2, \quad (i,j)\in\pi(n)\backslash (i_1,j_1).$$
Then $\hat{D}$ is a nontrivial feasible solution.
\end{proof}

\begin{figure}[htbp]
	\begin{minipage}[t]{0.3\linewidth}
		\centering
		\includegraphics[height=3cm,width=3.75cm]{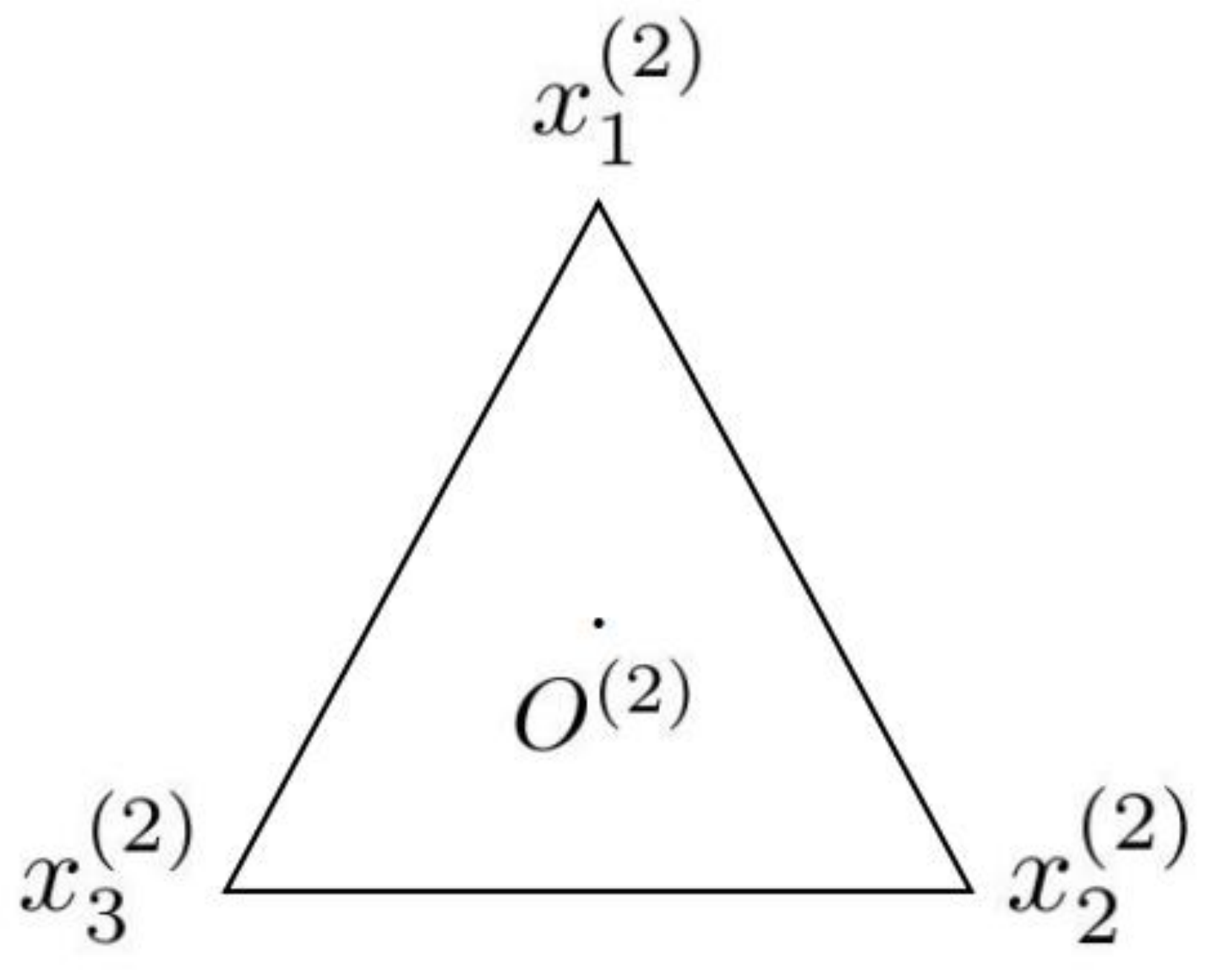}
		\caption{$x_1^{(2)},x_2^{(2)},x_3^{(2)}$}
		\label{triangleO}
	\end{minipage}
	\begin{minipage}[t]{0.3\linewidth}
		\centering
		\includegraphics[height=3cm,width=3.75cm]{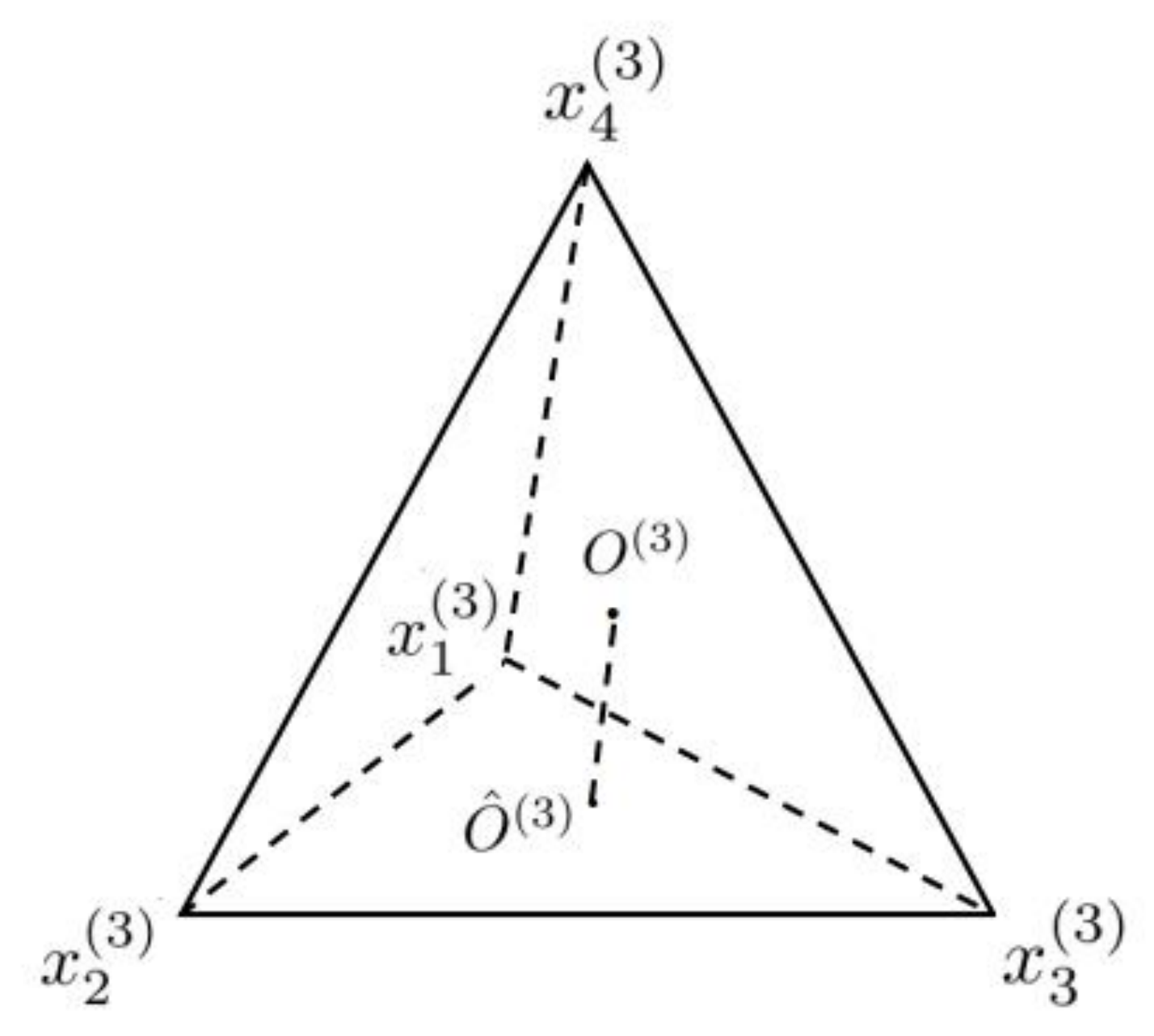}
		\caption{$x_1^{(3)},\dots, x_4^{(3)}$}
		\label{tetrahedron}
	\end{minipage}
	\begin{minipage}[t]{0.3\linewidth}
		\centering
		\includegraphics[height=3.5cm,width=3.345cm]{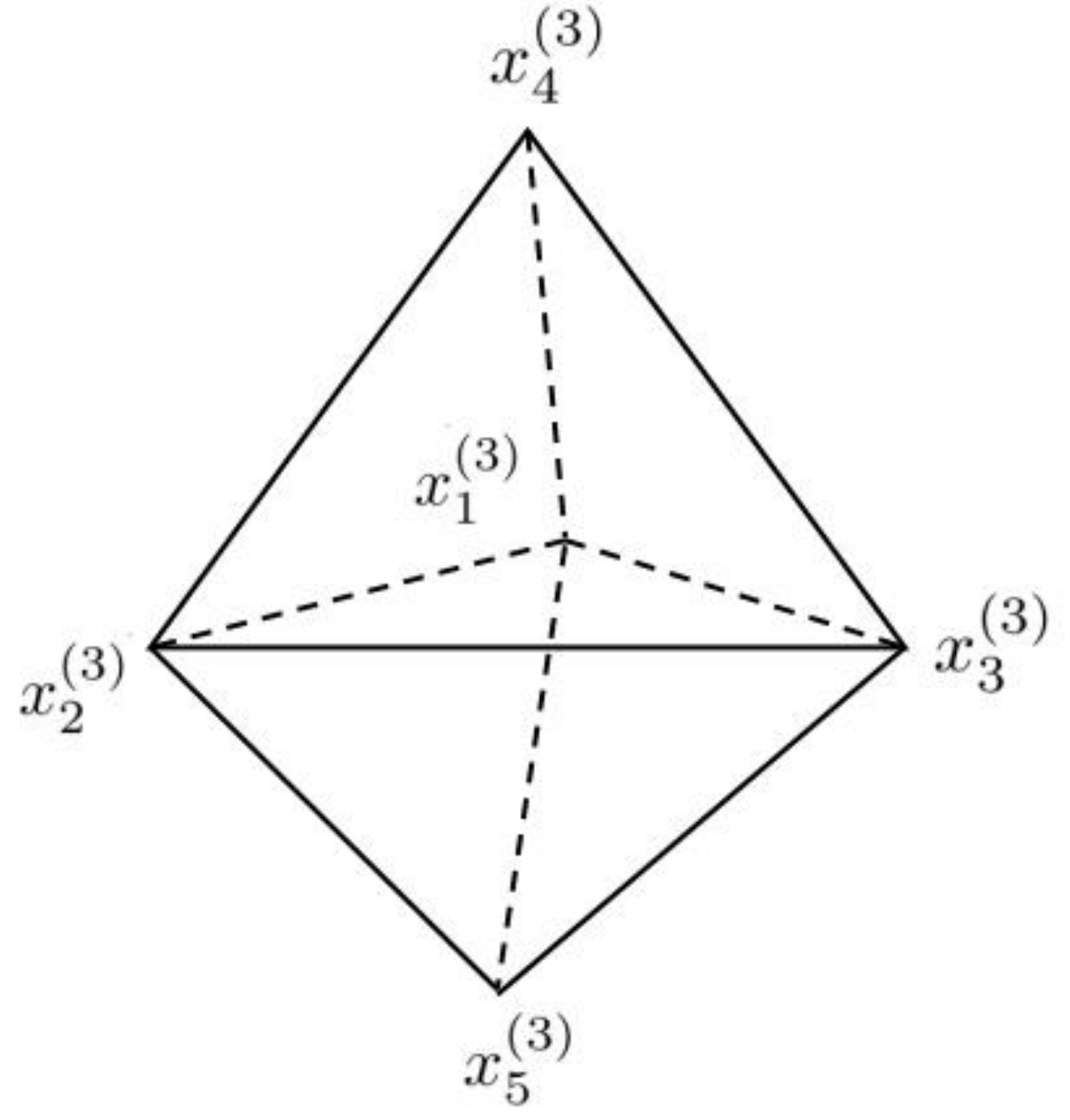}
		\caption{$x_1^{(3)},\dots, x_5^{(3)}$}
		\label{All the five points}
	\end{minipage}
\end{figure}

\end{document}